\newtheorem{theorem}{Theorem}[section]
\newtheorem{corollary}[theorem]{Corollary}
\newtheorem{lemma}[theorem]{Lemma}
\newtheorem{proposition}[theorem]{Proposition}
\theoremstyle{definition}
\newtheorem{definition}[theorem]{Definition}
\newtheorem{example}[theorem]{Example}
\theoremstyle{remark}
\newenvironment{psmatrix}{\left(\begin{smallmatrix}}{\end{smallmatrix}\right)}
\newcommand{\BB}{\mathbb{B}}
\newcommand{\CC}{\mathbb{C}}
\newcommand{\DD}{\mathbb{D}}
\newcommand{\FF}{\mathbb{F}}
\newcommand{\RR}{\mathbb{R}}
\newcommand{\TT}{\mathbb{T}}
\newcommand{\ZZ}{\mathbb{Z}}
\newcommand{\EE}{\mathbb{E}}
\newcommand{\LL}{\mathbb{L}}
\newcommand{\hG}{G}
\newcommand{\bG}{\overline{G}}
\newcommand{\bX}{Y}
\renewcommand{\o}{\bm{0}}
\numberwithin{equation}{section}
\definecolor{darkgreen}{cmyk}{1,0,1,.2}
\definecolor{darkorchid}{rgb}{1.0, 0.5, 0}
\definecolor{persimmon}{rgb}{0.93, 0.35, 0.0}
\newdimen\theight
\def\TeXref#1{%
             \leavevmode\vadjust{\setbox0=\hbox{{\tt
                     \quad\quad  {\small \textrm #1}}}%
             \theight=\ht0
             \advance\theight by \lineskip
             \kern -\theight \vbox to
             \theight{\rightline{\rlap{\box0}}%
             \vss}%
             }}%
\begin{document}
\title{Generalized Bishop frames of regular time-like curves in 4-dimensional Lorentz space $\mathbb{L}^{4}$}
\begin{abstract}
    We introduced generalized Bishop frames on curves in 4-dimensional Euclidean space $\mathbb{E}^{4}$, which are orthonormal frames such that the derivatives of the vectors of the frames along the curve can be expressed, via a certain matrix, as a linear combination of the vectors of the frame.
    In relation to that, we study generalized Bishop frames of regular time-like  curves. In a previous work, we showed that there is a hierarchy among different types of generalized Bishop frames for regular curves in the Euclidean space. Building upon this study, we further investigate it in the 4-dimensional Lorentz space $\mathbb{L}^4$. There are four types of generalized Bishop frames of regular time-like curves in $\mathbb{L}^{4}$ up to the change of the order of vectors fixing the first one which is the tangent vector. Unlike other types of curves, such as light-like and space-like ones, the time-like curve can be investigated in a manner analogous to the Euclidean case. We find that a hierarchy of frames exists, similar to that in the Euclidean setting. Based on this hierarchy, we propose a new classification of curves.
 \end{abstract}
\author{Subaru Nomoto$\,{}^{*}$}
\thanks{${}^{*}$ Corresponding author.}
\address{Department of Mathematical Sciences, Colleges of Science and Engineering, Ritsumeikan University, Nojihigashi 1-1-1, Kusatsu, Shiga, 525-8577, Japan}
\email{snomoto@fc.ritsumei.ac.jp}

\keywords{time-like curve, Lorentz space, Frenet frame, Bishop frame, rotation minimizing vector field}
\subjclass[2020]{53A04,\,53B30,\,53C42,\,53A45}

\date{}


  \maketitle 

\section{Introduction}
For the study of space curves and surfaces, various types of frames play important roles and provide significant information about the objects. For example, the Frenet frame yields curvature and torsion, while the Darboux frame gives geodesic curvature, geodesic torsion, and normal curvature. Curvature and torsion form a complete set of invariants of space curves via the Frenet frame, which is constructed by an algebraic method. While other frames can be constructed analytically, one such frame was introduced by L. R. Bishop [1], and it has certain advantages over the Frenet frame. In particular, every $C^{2}$ regular curve admits a Bishop frame, as shown in Bishop’s theorem \cite{b}, whereas some regular curves do not admit a Frenet frame. Further applications of Bishop frames in differential geometry can be found in \cite{BG,dSdS,KT,MR}, and they have also been applied in other fields, including computer graphics and engineering (see \cite{FL,W}). Ishikawa \cite{GI} studied frontals using a frame that lies between the Bishop frame and the Frenet frame. Related to our generalized Bishop frame, Kazan \cite{KA} investigated canal hypersurfaces. In this article, we continue to study relations among frames of curves obtained by generalizing the Frenet and Bishop frames. Throughout this article we consider smooth curves. Our motivation for this study was inspired by Bishop's considerations. Let us review the idea of Bishop to introduce Bishop frames. Let $\EE^{3}$ be 3-dimensional Euclidean space. For a regular curve $\gamma : I \to \EE^{3}$ defined on an open interval $I$ with an arc-length parametrization, a Bishop frame is a frame of $\gamma^{*}T\mathbb{E}^3$ 
of the form $\{\mathbb{T}, \mathbb{B}_{1}, \mathbb{B}_{2}\}$, where $\mathbb{T}$ is the tangent vector and satisfies the following differential equation: 
\[\begin{pmatrix}
\mathbb{T}^{\prime}\\
\mathbb{B}_{1}^{\prime}\\
\mathbb{B}_{2}^{\prime}
\end{pmatrix}
=\begin{pmatrix}
0&b_{1}&b_{2}\\
-b_{1}&0&0\\
-b_{2}&0&0
\end{pmatrix}
\begin{pmatrix}
\mathbb{T}\\
\mathbb{B}_{1}\\
\mathbb{B}_{2}
\end{pmatrix}
.
\]
Given a frame $\mathbb{F} = \begin{pmatrix}
\mathbb{F}_0 \\
\mathbb{F}_1 \\
\mathbb{F}_2
\end{pmatrix}$, we will call the one-parameter family of matrices $X$ such that $\mathbb{F}' = X \mathbb{F}$ the coefficient matrix of $\mathbb{F}$.
Bishop came up with the frame with the coefficient matrix of this form as follows.
\[
\begin{array}{c c c}
\left(\begin{matrix}
0 & x_{1} & 0 \\
-x_{1} & 0 & x_{2} \\
0 & -x_{2} & 0
\end{matrix}\right), 
& \hspace{40pt} 
\left(\begin{matrix}
0 & x_{1} & x_{2} \\
-x_{1} & 0 & 0 \\
-x_{2} & 0 & 0
\end{matrix}\right),
& \hspace{40pt} 
\left(\begin{matrix}
0 & 0 & x_{1} \\
0 & 0 & x_{2} \\
-x_{1} & -x_{2} & 0
\end{matrix}\right). \\
(1) & \hspace{40pt} (2) & \hspace{40pt} (3)
\end{array}
\]

A frame whose coefficient matrix is of type (2) is precisely the Bishop frame. A frame whose coefficient matrix has the form (1) is closely related to the Frenet frame; it becomes the Frenet frame if $x_1$ is positive.
Bishop observed that if a frame $\{\TT, \ZZ_1, \ZZ_2\}$ has a coefficient matrix of the form (3), then the frame $\{\TT, \ZZ_{2}, \ZZ_{1}\}$ has a coefficient matrix of the form (1). Hence, in 3-dimensional Euclidean space $\EE^{3}$, there are essentially two such types of frames for curves: the Frenet frame and Bishop frame. In the case of time-like curves in 3-dimensional Lorentz space, the coefficient matrices are as follows.
\[
\begin{array}{c c c}
\left(\begin{matrix}
0 & x_{1} & 0 \\
x_{1} & 0 & x_{2} \\
0 & -x_{2} & 0
\end{matrix}\right), 
& \hspace{40pt} 
\left(\begin{matrix}
0 & x_{1} & x_{2} \\
x_{1} & 0 & 0 \\
x_{2} & 0 & 0
\end{matrix}\right),
& \hspace{40pt} 
\left(\begin{matrix}
0 & 0 & x_{1} \\
0 & 0 & x_{2} \\
x_{1} & -x_{2} & 0
\end{matrix}\right). \\
(1) & \hspace{40pt} (2) & \hspace{40pt} (3)
\end{array}
\]
The difference from the Euclidean case lies in the signs of entries of the coefficient matrix. It is also equivalent to (1) and (3) by switching the second and third normal vectors. Therefore, in 3-dimensional Lorentzian space as well, the Frenet and Bishop frames are essentially the only types of such frames. We extend this idea to time-like curves in the 4-dimensional Lorentz space $\LL^{4}$.
Throughout this article, unless otherwise specified, every regular time-like curve is assumed to be parameterized by its proper time \(s\), where
\[
s(u)=\int_{u_0}^{u} \sqrt{-\bigl\langle \tfrac{d\gamma}{dt}(t), \tfrac{d\gamma}{dt}(t) \bigr\rangle}\, dt.
\]
 We consider an orthonormal frame of a curve $\gamma : I \to \LL^{4}$ as a matrix valued function $\ZZ : I \to O(1,3)$ whose row vectors form the frame. For a frame of a regular time-like curve, let X be the matrix-valued function such that $\ZZ'=X\ZZ$.
Let $\gamma : I \to \LL^{4}$ be a regular time-like curve. 
An orthonormal frame $\{\TT, \ZZ_1, \ZZ_2, \ZZ_3\}$ of $\gamma$ is called a Frenet frame if its coefficient matrix $X$ takes the first form below, 
and it is called a Bishop frame if $X$ takes the second form:
\[
\begin{array}{c}
\begin{pmatrix}
\phantom{-}0&\phantom{-}f_{1}&\phantom{-}0&\phantom{-}0\\
f_{1}&\phantom{-}0&\phantom{-}f_{2}&\phantom{-}0\\
\phantom{-}0&-f_{2}&\phantom{-}0&\phantom{-}f_{3}\\
\phantom{-}0&\phantom{-}0&-f_{3}&\phantom{-}0
\end{pmatrix}\\[6pt]
\text{the coefficient matrix of Frenet frame}
\end{array}
\qquad\text{and}\qquad
\begin{array}{c}
\begin{pmatrix}
\phantom{-}0&\phantom{-}b_{1}&\phantom{-}b_{2}&\phantom{-}b_{3}\\
b_{1}&\phantom{-}0&\phantom{-}0&\phantom{-}0\\
b_{2}&\phantom{-}0&\phantom{-}0&\phantom{-}0\\
b_{3}&\phantom{-}0&\phantom{-}0&\phantom{-}0
\end{pmatrix}\\[6pt]
\text{the coefficient matrix of Bishop frame}
\end{array}
\]
respectively, where $b_1, b_2, b_3, f_3$ are functions on $I$ and $f_1, f_2$ are positive functions on $I$. Generalizing the situation for space curves, we now give the following definition:
\begin{definition}
An orthonormal frame of a regular time-like curve in $\LL^{4}$ is called a generalized Bishop frame if its coefficient matrix $(a_{ij})$ has at most three nonzero entries in the strictly upper triangular part. (i.e.\ among the entries $a_{ij}$ with $\, i< j,$ at most three are nonzero.)
\end{definition}
Aside from degenerate cases that contain a zero column vector, 
there are 16 possible types of such frame (see Section \ref{sec:pre}). Moreover, these 16 frames split into four equivalence classes under permutations of the vectors that fix the first one (the tangent vector). Concretely, if a frame's coefficient matrix takes the respective shape for some functions $x_1,x_2, x_3$ up to reordering the last three vectors, we call it a \emph{generalized Bishop frame} of type B, C, D or F for time-like curves, which has the following forms of coefficient matrices up to permutations of the vectors of the frame:
\begin{align*}
\begin{pmatrix}
\phantom{-}0&\phantom{-}x_{1}&\phantom{-}x_{2}&\phantom{-}x_{3}\\
x_{1}&\phantom{-}0&\phantom{-}0&\phantom{-}0\\
x_{2}&\phantom{-}0&\phantom{-}0&\phantom{-}0\\
x_{3}&\phantom{-}0&\phantom{-}0&\phantom{-}0
\end{pmatrix}
, && 
\begin{pmatrix}
\phantom{-}0&\phantom{-}x_{1}&\phantom{-}x_{2}&\phantom{-}0\\
x_{1}&\phantom{-}0&\phantom{-}0&\phantom{-}x_{3}\\
x_{2}&\phantom{-}0&\phantom{-}0&\phantom{-}0\\
\phantom{-}0&-x_{3}&\phantom{-}0&\phantom{-}0
\end{pmatrix}, \\
\text{Type B} \hspace{40pt}&& \text{Type C}\hspace{40pt} 
\end{align*}
\begin{align*}
\begin{pmatrix}
\phantom{-}0&\phantom{-}x_{1}&\phantom{-}0&\phantom{-}0\\
x_{1}&\phantom{-}0&\phantom{-}x_{2}&\phantom{-}x_{3}\\
\phantom{-}0&-x_{2}&\phantom{-}0&\phantom{-}0\\
\phantom{-}0&-x_{3}&\phantom{-}0&\phantom{-}0
\end{pmatrix}, &&
\begin{pmatrix}
\phantom{-}0&\phantom{-}x_{1}&\phantom{-}0&\phantom{-}0\\
x_{1}&\phantom{-}0&\phantom{-}x_{2}&\phantom{-}0\\
\phantom{-}0&-x_{2}&\phantom{-}0&\phantom{-}x_{3}\\
\phantom{-}0&\phantom{-}0&-x_{3}&\phantom{-}0
\end{pmatrix}. \\
\text{Type D}\hspace{40pt} && \text{Type F}\hspace{40pt} 
\end{align*}
In this article, a generalized Bishop frame of type B, C, D or F may be referred to simply as a frame of type B, C, D, or F, respectively. A frame of type B will  also be called a Bishop frame.
\,A frame of type F is closely related to the Frenet frame ; specifically, both $x_1$ and $x_2$ are positive.
A frame of type B corresponds to the original Bishop frame in Euclidean space. Analogous to Bishop's theorem in the Euclidean case, one can show that every regular time-like curve admits a frame of type B. Ishikawa \cite{GI} used a frame of type D along regular curves on the Euclidean space to study frontals in the Euclidean space. 

This paper present a hierarchy among four types of frames associated with time-like curves in 4-dimensional Lorentz space $\LL^{4}$. The results are parallel to those in \cite{NN}, but we employed different methods to prove certain lemmas.
\begin{theorem}\label{thm:hier}
Let $\gamma$ be a regular time-like curve in $\LL^4$.
\begin{enumerate}
    \item If $\gamma$ admits a generalized Bishop frame of type F, then $\gamma$ also admits a generalized Bishop frame of type D. 
    \item If $\gamma$ admits a generalized Bishop frame of type D, then $\gamma$ also admits a generalized Bishop frame of type C.  
\end{enumerate}
\end{theorem}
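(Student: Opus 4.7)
The plan for both parts is to construct, from the given frame, a new frame of the target type via a rotation $Q: I \to \SO(3)$ of the last three vectors, leaving the tangent $\TT$ fixed; the rotation will be determined by an appropriate ODE.

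For (1), starting from a type F frame $(\TT, \ZZ_1, \ZZ_2, \ZZ_3)$, I would rotate only $(\ZZ_2, \ZZ_3)$ in their space-like $2$-plane: set
\[
\tilde{\ZZ}_2 = \cos\theta\, \ZZ_2 + \sin\theta\, \ZZ_3, \qquad \tilde{\ZZ}_3 = -\sin\theta\, \ZZ_2 + \cos\theta\, \ZZ_3,
\]
with $\theta$ any antiderivative of $-x_3$. Substituting the type F relations $\ZZ_2' = -x_2 \ZZ_1 + x_3 \ZZ_3$ and $\ZZ_3' = -x_3 \ZZ_2$ into the expressions for $\tilde{\ZZ}_2'$ and $\tilde{\ZZ}_3'$, the $\ZZ_2$- and $\ZZ_3$-components cancel because $\theta' + x_3 = 0$, leaving each $\tilde{\ZZ}_j' \in \RR\, \ZZ_1$. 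The new frame $(\TT, \ZZ_1, \tilde{\ZZ}_2, \tilde{\ZZ}_3)$ then has a type D coefficient matrix, with the new off-tangent entries $\tilde{x}_2 = x_2 \cos\theta$ and $\tilde{x}_3 = -x_2 \sin\theta$.

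For (2), given a type D frame $(\TT, \ZZ_1, \ZZ_2, \ZZ_3)$, I would look for a unit vector $\tilde{\ZZ}_2 = a \ZZ_1 + b \ZZ_2 + c \ZZ_3$ whose derivative is parallel to $\TT$. Setting the $\ZZ_i$-components of $\tilde{\ZZ}_2'$ to zero gives the linear system
\[
a' = b x_2 + c x_3, \qquad b' = -a x_2, \qquad c' = -a x_3,
\]
which preserves $a^2 + b^2 + c^2$, so its solutions live on $S^2$. I would then set $\tilde{\ZZ}_3 = (-c\, \ZZ_2 + b\, \ZZ_3)/\sqrt{b^2 + c^2}$, the unique (up to sign) unit vector in $\mathrm{span}(\ZZ_2, \ZZ_3)$ orthogonal to $\tilde{\ZZ}_2$, and take $\tilde{\ZZ}_1$ to complete the orthonormal frame. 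The resulting coefficient matrix is of type C because: (i) $\TT'$ has no $\tilde{\ZZ}_3$-component, since $\tilde{\ZZ}_3 \perp \ZZ_1$ and $\TT' = x_1 \ZZ_1$; (ii) $\tilde{\ZZ}_2'$ lies in $\RR\,\TT$ by construction; and (iii) the remaining zero entries follow from the antisymmetry of the $3 \times 3$ spatial block.

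The main obstacle in (2) is ensuring $\sqrt{b^2 + c^2} > 0$ on all of $I$, so that $\tilde{\ZZ}_3$ is everywhere defined; equivalently, the $S^2$-trajectory $(a, b, c)$ must avoid the two poles $(\pm 1, 0, 0)$. Since the time-$s$ flow $\Phi_s$ of the linear ODE is a diffeomorphism of $S^2$, the set of initial conditions whose orbit meets a pole at some $s \in I$ is the union of the images of the two smooth curves $s \mapsto \Phi_s^{-1}((\pm 1, 0, 0))$, hence has Lebesgue measure zero in $S^2$. Choosing $(a(s_0), b(s_0), c(s_0))$ outside this union yields a trajectory with $b^2 + c^2 > 0$ throughout $I$, completing the construction.
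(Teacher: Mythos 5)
Your argument is correct, and on both parts it takes a genuinely different route from the paper. For (1), the paper does not manipulate the type F frame directly: it first proves a characterization (Proposition \ref{proposition:1}) that $\gamma$ admits a type D frame if and only if $\TT'=d_1\DD_1$ for a smooth unit space-like normal field $\DD_1$, and the nontrivial direction is obtained by regarding $\DD_1$ as the tangent of an auxiliary space-like curve, applying a space-like version of Bishop's theorem (Lemma \ref{lemma:1}) to obtain a type B frame $\{\DD_1,\TT,\DD_2,\DD_3\}$, and swapping the first two vectors; a type F frame satisfies $\TT'=x_1\ZZ_1$, so the criterion applies. Your rotation of $(\ZZ_2,\ZZ_3)$ by an angle $\theta$ with $\theta'=-x_3$ is more elementary and self-contained (no auxiliary curve, no space-like Bishop theorem), at the cost of not producing the ``if and only if'' statement that the paper reuses elsewhere (Example \ref{ex:noD}, Corollary \ref{cor:2regD}). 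For (2), the paper passes to a Bishop frame, applies Sard's theorem to the projectivized curve $s\mapsto[(y_1,y_2,y_3)(s)]$ to find a constant direction it misses, rotates the Bishop frame by a constant $Q\in O(3)$ so that $\hat{\BB}_2$ points in that direction, and then runs an explicit angle construction in the $(\hat{\BB}_1,\hat{\BB}_3)$-plane (Lemmas \ref{lem:Bishop} and \ref{lem:2}). You instead solve, inside the type D frame itself, for a rotation-minimizing unit normal field $\tilde{\ZZ}_2$ (derivative parallel to $\TT$) and complete the frame; your genericity step on $S^2$ plays exactly the role of the paper's Sard argument, with the roles of the two fields interchanged (the paper moves the Bishop axis away from the fixed field $\DD_1$, while you move the rotation-minimizing field away from the fixed $\ZZ_1$), and both rest on the fact that a smooth curve cannot cover a surface. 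Your verification of the type C shape is right: the relations $X_{0j}=X_{j0}$ and $X_{ij}=-X_{ji}$ ($i,j\ge 1$) for an $O(1,3)$-frame force all the remaining zeros once $\tilde{\ZZ}_2'\in\RR\,\TT$ and $\langle\TT',\tilde{\ZZ}_3\rangle=0$ are established. Two points worth making explicit in a final write-up: the system for $(a,b,c)$ is linear with smooth coefficients, so its solutions extend over all of $I$; and the exceptional set in $S^2$ is a countable union of images of compact arcs of the two smooth curves $s\mapsto\Phi_s^{-1}(\pm 1,0,0)$, hence null, so an admissible initial condition exists.
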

This hierarchy is proper in the following sense. 
\begin{theorem}\label{thm:noCD}
\begin{enumerate}
    \item There exists a regular time-like curve that admits a generalized Bishop frame of type D but does not admit one of type F. 
    \item There exists a regular time-like curve that admits a generalized Bishop frame of type C but does not admit one of type D. 
    \item There exists a regular time-like curve that does not admit a generalized Bishop frame of type C.
\end{enumerate}
\end{theorem}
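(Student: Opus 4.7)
The general strategy for each claim is to exhibit an explicit regular time-like curve and verify the existence or non-existence of the relevant frame. Throughout, I use the observation that any two orthonormal frames along $\gamma$ with the same tangent vector $\TT$ differ by a smooth family $R(s)$ lying in the isotropy subgroup of $O(1,3)$ fixing $\TT$; this group is isomorphic to $O(3)$ acting on the spacelike $3$-plane $\TT^{\perp}$. Since every regular time-like curve admits a type B frame, the existence of a frame of type F, D, or C reduces to the existence of such an $R(s)$ transforming the Bishop coefficient matrix into the prescribed shape, while non-existence amounts to showing that no such $R(s)$ exists.

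For part (1), type F requires $x_1>0$ everywhere, forcing $\TT'\neq 0$ throughout. I plan to construct a curve whose tangent derivative has an isolated zero of even order at some $s_0$, arranged so that $\TT'/\|\TT'\|$ extends smoothly across $s_0$ as a unit vector $\ZZ_1$. Then type F is ruled out by the vanishing of $\TT'$ at $s_0$, while a type D frame can be built explicitly by completing $(\TT,\ZZ_1)$ with a Bishop-style rotation-minimizing pair in the orthogonal $2$-plane. For part (2), I would start from a curve admitting a type C frame whose coefficient $x_3$ has a specifically chosen oscillation, and derive the first-order ODE that the rotation angle in $\mathrm{span}\{\ZZ_2,\ZZ_3\}$ of a hypothetical type D frame would have to satisfy; with an appropriate choice of $x_3$, this ODE turns out to be inconsistent with smoothness, precluding type D.

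For part (3), the most subtle case, I plan to prescribe Bishop coefficients $(b_1,b_2,b_3)$ with a specific non-degenerate behavior (for instance, all nowhere-zero and sufficiently generic) and consider an arbitrary $R(s)\in O(3)$. Imposing the type C pattern on the transformed coefficient matrix $R X_B R^{-1}+R'R^{-1}$ yields an overdetermined system of algebraic-differential equations on $R(s)$, and I would show that this system is inconsistent at every $s$ for a suitable choice of the $b_i$. The main obstacle is precisely part (3): one must rule out all smooth families $R(s)$, not just natural candidates, which requires a careful analysis of the orbit structure of the $O(3)$-action on the space of $4\times 4$ generalized-Bishop coefficient matrices. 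My plan is to isolate a minor or combination of minors of the transformed matrix that must vanish under the type C pattern, and exhibit $(b_1,b_2,b_3)$ making this vanishing incompatible with $R(s)\in O(3)$.
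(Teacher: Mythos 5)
There are genuine gaps in all three parts, and part (3) as proposed cannot work at all. If you choose Bishop coefficients $b_1,b_2,b_3$ that are all nowhere zero, then $\TT'=b_1\BB_1+b_2\BB_2+b_3\BB_3$ is nowhere zero, so the curve is $2$-regular; by Theorem \ref{th:DtoC} (equivalently Corollary \ref{thm:2reg}) every time-like $2$-regular curve \emph{does} admit a frame of type C. So no amount of genericity of nowhere-vanishing $b_i$ can produce a counterexample --- any curve without a type C frame must fail $2$-regularity. The paper's example (Proposition \ref{prop:typeC}) instead takes $b_1,b_2,b_3$ supported on three disjoint intervals with flat transitions, so that $\TT'$ successively points in three mutually orthogonal directions separated by zeros of infinite order; the obstruction is that no smooth $2$-plane field $\mathrm{span}\{\ZZ_1,\ZZ_2\}$ with the required structure can accommodate all three episodes.

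Part (1) rests on a false premise: a frame of type F is defined only by the tridiagonal \emph{shape} of its coefficient matrix; positivity of $x_1,x_2$ is what makes it the Frenet frame, not part of the definition of type F (this is why Proposition \ref{prop:FFrenet} speaks of agreement ``up to sign''). Hence a curve whose $\TT'$ vanishes at an isolated point may perfectly well admit a type F frame with $x_1$ vanishing there, and your construction does not rule this out. The actual content of part (1) --- and of Theorem \ref{thm:noF} --- is to exclude type F for a $2$-regular curve, which the paper does with Example \ref{ex:noF}: a branch-switching tangent field whose putative second normal direction cannot be chosen continuously across $s=0$. For part (2), the correct obstruction is Proposition \ref{proposition:1}: type D exists iff $\TT'=d_1\DD_1$ for a smooth unit normal field $\DD_1$. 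In Example \ref{ex:noD} the direction of $\TT'$ is the second coordinate axis for $t>0$ and the third for $t<0$, with flat vanishing at $t=0$, so no continuous unit direction field exists. Your proposed mechanism --- an ODE for a rotation angle that is ``inconsistent with smoothness'' --- does not occur: for smooth coefficients the angle equation is a linear first-order ODE and always has global smooth solutions. The genuine obstruction in all the non-existence statements here is the non-liftability of a direction field across a flat zero of $\TT'$ (or of a higher derivative), not an inconsistency in the transformation ODE, and your proposal never engages with that phenomenon.
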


Let us consider a frame for the following type of curve.
\begin{definition}[{\cite{NN}}]
A time-like curve is called \emph{$2$-regular} if both its tangent vector and its derivative are nowhere zero. 
\end{definition}

From Theorem \ref{thm:hier}, we can derive the following corollary.
\begin{corollary}\label{thm:2reg}
If a time-like $2$-regular curve $\gamma$ in $\LL^{4}$ admits a frame of type F, then it also admits generalized Bishop frames of all the other types B, C and D. In particular, if $\gamma$ admits the Frenet frame, then it admits frames of all other types.
\end{corollary}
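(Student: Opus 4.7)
The plan is to derive this corollary as a short formal consequence of Theorem \ref{thm:hier} together with the Lorentzian analog of Bishop's theorem recalled in the introduction, namely the assertion that every regular time-like curve admits a frame of type B.

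First, since $\gamma$ is a regular time-like curve, the Bishop existence statement from the introduction immediately supplies a generalized Bishop frame of type B on $\gamma$. Note that this step uses neither the $2$-regularity of $\gamma$ nor the existence of a type F frame; it is a fact about all regular time-like curves.

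Next, assuming $\gamma$ admits a frame of type F, I would apply Theorem \ref{thm:hier}(1) to convert it into a frame of type D, and then apply Theorem \ref{thm:hier}(2) to this newly produced frame of type D to obtain a frame of type C. Combined with the type B frame from the previous paragraph, this exhibits frames of all four types B, C, D and F on $\gamma$, which is the content of the main assertion.

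For the ``in particular'' clause, I would observe that the Frenet frame, whose coefficient matrix is the first matrix displayed just before the definition of the generalized Bishop frame, fits exactly the type F template under the identification $x_1=f_1$, $x_2=f_2$, $x_3=f_3$, with the additional positivity of $x_1$ and $x_2$ automatically satisfied. Hence a curve admitting a Frenet frame automatically admits a frame of type F, and the first half of the corollary then supplies frames of all remaining types. The entire argument is a bookkeeping combination of the already-established hierarchy and the existence of type B frames, so there is no substantive obstacle; the only point worth verifying carefully is the direct matching of the Frenet coefficient matrix with the type F template.
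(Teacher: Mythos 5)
Your proposal is correct and follows essentially the same route as the paper: the paper also combines the Lorentzian Bishop existence result (Lemma \ref{lemma:0}) for type B with the hierarchy F $\Rightarrow$ D $\Rightarrow$ C (stated in the paper as Corollary \ref{cor:2regD} and Theorem \ref{th:DtoC}, which together constitute Theorem \ref{thm:hier}), and the observation that a Frenet frame is a frame of type F. No gaps.
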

The next result shows that this hierarchy for time-like $2$-regular curves is also proper.

\begin{theorem}\label{thm:noF}
There is a time-like $2$-regular curve in $\LL^{4}$ that does not admit a generalized Bishop frame of type F.
\end{theorem}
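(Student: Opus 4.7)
The plan is to exhibit a concrete $2$-regular time-like curve for which the positivity condition in a type F frame forces a contradiction. The key preliminary observation is this reduction: in any type F frame $\{\TT,\ZZ_1,\ZZ_2,\ZZ_3\}$ of a $2$-regular time-like curve, the relation $\TT'=x_1\ZZ_1$ together with $x_1>0$ uniquely determines $x_1=\|\TT'\|$ and $\ZZ_1=\TT'/\|\TT'\|$; here the positive square root is well-defined because $\ZZ_1\perp\TT$ and $\TT^\perp\subset\LL^4$ is positive-definite, so $\langle\TT',\TT'\rangle>0$ whenever $\TT'\neq 0$. The next relation $\ZZ_1'=x_1\TT+x_2\ZZ_2$ then identifies the product $x_2\ZZ_2$ with $\ZZ_1'-x_1\TT$, and the requirement $x_2>0$ forces this vector to be nowhere zero on $I$.

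With this reduction in hand, it suffices to exhibit a $2$-regular time-like curve for which $\ZZ_1'-x_1\TT$ vanishes somewhere. I take the planar hyperbola
\[
\gamma(s)=(\sinh s,\;\cosh s-1,\;0,\;0),\qquad s\in\RR.
\]
A direct calculation gives $\TT(s)=(\cosh s,\sinh s,0,0)$ with $\langle\TT,\TT\rangle=-1$, so $s$ is proper time, and $\TT'(s)=(\sinh s,\cosh s,0,0)$ is unit space-like and in particular nowhere zero, so $\gamma$ is $2$-regular. By the reduction above, any type F frame along $\gamma$ must satisfy $x_1\equiv 1$ and $\ZZ_1=\TT'$, and then $\ZZ_1'(s)=(\cosh s,\sinh s,0,0)=\TT(s)$ yields $\ZZ_1'-x_1\TT\equiv 0$. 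This contradicts $x_2>0$ and shows that $\gamma$ admits no frame of type F.

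There is no essential technical obstacle in this argument; the only point worth emphasizing is the uniqueness of $(\ZZ_1,x_1)$ obtained from $x_1>0$, which is where $2$-regularity and the space-like nature of $\TT^\perp$ are both used. Geometrically, the image of $\TT$ on the hyperbolic space $H^3\subset\LL^4$ is a geodesic, equivalently $\TT''\in\mathrm{span}(\TT,\TT')$, which is precisely the condition that makes $\ZZ_1'-x_1\TT$ vanish. If a non-planar counterexample is preferred, the same argument works for any $2$-regular time-like curve with $\TT''(s_0)\in\mathrm{span}(\TT(s_0),\TT'(s_0))$ at a single point $s_0\in I$: it gives $x_2(s_0)=0$ and again violates positivity.
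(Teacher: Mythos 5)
There is a genuine gap, and it lies in the definition you are using for a frame of type F. You treat the positivity of $x_2$ (and $x_1$) as part of the definition, but in this paper a generalized Bishop frame of type F is defined only by the \emph{shape} of its coefficient matrix, for \emph{some} functions $x_1,x_2,x_3$ with no sign constraint; positivity of $x_1,x_2$ is what singles out the Frenet frame as a special case. This is why Proposition \ref{prop:FFrenet} asserts that a type F frame agrees with the Frenet frame only \emph{up to sign} in every entry of the coefficient matrix --- a statement that would be vacuous for $x_1,x_2$ if positivity were built into type F --- and why Theorem \ref{thm:noF} is a nontrivial statement at all. Under the correct definition your hyperbola $\gamma(s)=(\sinh s,\cosh s-1,0,0)$ is not a counterexample: taking $\ZZ_1=\TT'$, $\ZZ_2$ and $\ZZ_3$ any orthonormal (e.g.\ constant, or rotating) frame of the fixed plane $\mathrm{span}(e_3,e_4)$, one gets a frame whose coefficient matrix has the type F shape with $x_1=1$ and $x_2=0$. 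The same objection applies to your closing remark: a point where $\TT''\in\mathrm{span}(\TT,\TT')$ only forces $x_2(s_0)=0$, which is permitted.

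Your preliminary reduction is sound and is in fact the right starting point: for a $2$-regular time-like curve, $\ZZ_1=\TT'/\|\TT'\|$ and $x_1=\|\TT'\|$ are forced (using that $\TT^{\perp}$ is space-like), and the existence of a type F frame is then equivalent to being able to write $\ZZ_1'-x_1\TT$ as $x_2\ZZ_2$ for a smooth function $x_2$ (of arbitrary sign, possibly vanishing) and a smooth unit normal field $\ZZ_2$, with the residual condition on $\ZZ_3$. The obstruction must therefore be that the \emph{direction} of $\ZZ_1'-x_1\TT$ cannot be chosen smoothly, not that its length vanishes. This is exactly what the paper's Example \ref{ex:noF} arranges: flat functions of the form $e^{-1/s}$ are placed in the $e_2$-slot for $s>0$ and in the $e_3$-slot for $s<0$, so that $\ZZ_1'-x_1\TT$ vanishes to infinite order at $s=0$ while pointing into two genuinely different planes on either side; no smooth unit field $\ZZ_2$ can absorb this, even allowing $x_2$ to change sign. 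To repair your proof you would need to replace the planar hyperbola by a curve of this kind and then argue the non-extendability of $\ZZ_2$ across $s=0$.
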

In analogy with Bishop's result, every $C^{2}$ regular time-like curve admits a frame of type B. We will prove the Lorentz version of Bishop's theorem. Note that the part of Corollary \ref{thm:2reg} confirming the existence of a generalized Bishop frame of type B is essentially Bishop's theorem in the Lorentz setting.

Using the Frenet frame for a curve makes it possible to compute specific geometric invariants of the curve, such as its curvature and torsion.
\ However, some curves do not admit the Frenet frame. By taking advantage of that fact, we can view this situation as a means of classifying curves.
For space-like curve and light-like curve, we can not always define unit vectors for frames, so the problem is more complicate.
\section{Basic properties of frames}\label{sec:pre}
In $4$-dimensional Lorentz space $\LL^{4}$, we can consider three types of vectors. Let us review the definitions of three types of vectors.
\begin{definition}[{\cite{bo}}]
Let $\bm{v}$ be a vector in $\LL^{4}$.
\begin{enumerate}
\item if \(\langle \bm{v}, \bm{v}\rangle<0\), we call \(\bm{v}\) a time-like vector.
\item if \(\bm{v}=\bm{0}\) or \(\langle \bm{v}, \bm{v}\rangle>0\), we call \(\bm{v}\) a space-like vector.
\item if \(\bm{v}\neq\bm{0}\) and \(\langle \bm{v}, \bm{v}\rangle=0\), we call \(\bm{v}\) a light-like vector.
\end{enumerate}    
\end{definition}
According to the Lorentzian metric, the tangent vector of a regular curve at each point is 
either time-like, space-like, or null. First, we note that the norm of a time-like or space-like vector $\bm{v}$ in $\LL^{4}$ is defined by $\|\bm{v}\|=\sqrt{|\langle\bm{v},\bm{v}\rangle|}$. Furthermore even there exists three types of vectors in $\LL^{4}$, time-like, space-like and null vector, we can exclude the possibility a curve having normal vectors as time-like or null in the moving frame with the following lemma\,\cite{YM}.
\begin{lemma}\label{lorentz}
    For a non zero vector $\bm{v}\in\RR^{n+1}_{1}$, the orthogonal complement $\bm{v}^{\perp} \coloneqq\{\bm{w}\in\RR^{n+1}_{1}\mid \langle\bm{v}, \bm{w}\rangle=0\}$ satisfies the followings.
    \begin{enumerate}
    \item $\bm{v}^{\perp}$ is a n-dimensional subspace of $\RR^{n+1}_{1}$.
    \item If $\bm{v}$ is a time-like vector, $\bm{v}^{\perp}$ is a space-like subspace of $\RR^{n+1}$, which means $\bm{v}^{\perp}$ has bases with space-like vectors only.
    \item If $\bm{v}$ is a space-like vector, the restriction of the inner product to $\bm{v}^{\perp}$ has the signature $(n-1, 1)$.
    \end{enumerate}
    \end{lemma}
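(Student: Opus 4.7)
My plan is to derive all three statements from standard linear algebra applied to the Lorentzian inner product on $\RR^{n+1}_{1}$, which is non-degenerate and has signature $(n,1)$. For part (1), I would view $\bm{v}^{\perp}$ as the kernel of the linear functional $\varphi : \RR^{n+1}_{1} \to \RR$ defined by $\varphi(\bm{w}) = \langle \bm{v}, \bm{w}\rangle$. Because the Lorentzian inner product is non-degenerate and $\bm{v} \neq \bm{0}$, this functional is nonzero, and the rank--nullity theorem immediately yields $\dim \bm{v}^{\perp} = n$.

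For parts (2) and (3), the crucial step is to establish the orthogonal direct sum decomposition $\RR^{n+1}_{1} = \RR\bm{v} \oplus \bm{v}^{\perp}$. This works because $\langle \bm{v}, \bm{v}\rangle \neq 0$ in both cases, so $\bm{v} \notin \bm{v}^{\perp}$; combined with (1), a dimension count forces the sum to exhaust $\RR^{n+1}_{1}$. Having the decomposition at hand, I would then invoke Sylvester's law of inertia to conclude that the signatures of the two summands must add up to $(n,1)$.

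The two cases then fall out immediately. If $\bm{v}$ is time-like, $\RR\bm{v}$ has signature $(0,1)$, so $\bm{v}^{\perp}$ must have signature $(n,0)$; hence the restriction of the inner product is positive definite, every nonzero vector in $\bm{v}^{\perp}$ is space-like, and in particular any basis consists of space-like vectors, proving (2). If $\bm{v}$ is space-like, $\RR\bm{v}$ has signature $(1,0)$, so by the same computation $\bm{v}^{\perp}$ has signature $(n-1,1)$, which is precisely (3).

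I do not foresee any genuine obstacle: the argument is a formal application of standard facts about non-degenerate symmetric bilinear forms. The only point requiring mild care is to track the sign convention for the signature and to verify that $\bm{v}$ fails to lie in $\bm{v}^{\perp}$ exactly when it is non-null, since this non-self-orthogonality is what makes the orthogonal splitting, and hence the additivity of signatures, available.
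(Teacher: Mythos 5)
Your argument is correct: rank--nullity applied to $\bm{w}\mapsto\langle\bm{v},\bm{w}\rangle$ gives (1), and the orthogonal splitting $\RR^{n+1}_{1}=\RR\bm{v}\oplus\bm{v}^{\perp}$ (valid precisely because $\langle\bm{v},\bm{v}\rangle\neq 0$ in cases (2) and (3)) together with additivity of the signature yields (2) and (3); your sign conventions match the paper's (time-like means $\langle\bm{v},\bm{v}\rangle<0$, signature recorded as (positive, negative)). The paper itself offers no proof --- it imports the lemma from the cited reference [YM] --- so there is nothing to compare against; your write-up is the standard self-contained argument and would serve as a proof if one were wanted. The only point I would make explicit is that the non-degeneracy of the restriction of the form to $\bm{v}^{\perp}$ (needed to speak of its signature and to apply Sylvester cleanly) follows from the orthogonal decomposition together with the non-degeneracy of the ambient form and of its restriction to $\RR\bm{v}$; you gesture at this but do not state it.
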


By restricting our attention to time-like curves, we can investigate a case nearly identical to that for curves in Euclidean space. Thanks to this lemma, we can classify the frames of time-like curves in a way similar to the case of regular curves in $\mathbb{E}^4$. In $\LL^{4}$, there are in fact 16 possible generalized Bishop frames for a regular time-like curve (excluding those whose coefficient matrix contains a zero column). These frames can be classified into four types under the action of the symmetric group $\mathfrak{S}_{3}$, which permutes the second, third and fourth vectors of the frame. Symbolically, one may represent them using a matrix of the form:\\
\begin{minipage}[b]{0.48\columnwidth}
 \begin{align*}
\begin{psmatrix}
\phantom{-}0&\phantom{-}f_{1}&\phantom{-}0&\phantom{-}0\\
f_{1}&\phantom{-}0&\phantom{-}f_{2}&\phantom{-}0\\
\phantom{-}0&-f_{2}&\phantom{-}0&\phantom{-}f_{3}\\
\phantom{-}0&\phantom{-}0&-f_{3}&\phantom{-}0
\end{psmatrix}
\begin{psmatrix}
\phantom{-}0&\phantom{-}0&\phantom{-}0&\phantom{-}f_{1}\\
\phantom{-}0&\phantom{-}0&\phantom{-}f_{2}&\phantom{-}f_{3}\\
\phantom{-}0&-f_{2}&\phantom{-}0&\phantom{-}0\\
f_{1}&-f_{3}&\phantom{-}0&\phantom{-}0
\end{psmatrix}
\begin{psmatrix}
\phantom{-}0&\phantom{-}0&\phantom{-}0&\phantom{-}f_{1}\\
\phantom{-}0&\phantom{-}0&\phantom{-}f_{2}&\phantom{-}0\\
\phantom{-}0&-f_{2}&\phantom{-}0&\phantom{-}f_{3}\\
f_{1}&\phantom{-}0&-f_{3}&\phantom{-}0
\end{psmatrix}\\
\begin{psmatrix}
\phantom{-}0&\phantom{-}f_{1}&\phantom{-}0&\phantom{-}0\\
f_{1}&\phantom{-}0&\phantom{-}0&\phantom{-}f_{2}\\
\phantom{-}0&\phantom{-}0&\phantom{-}0&\phantom{-}f_{3}\\
\phantom{-}0&-f_{2}&-f_{3}&\phantom{-}0
\end{psmatrix}
\begin{psmatrix}
\phantom{-}0&\phantom{-}0&\phantom{-}f_{1}&\phantom{-}0\\
\phantom{-}0&\phantom{-}0&\phantom{-}f_{2}&\phantom{-}f_{3}\\
f_{1}&-f_{2}&\phantom{-}0&\phantom{-}0\\
\phantom{-}0&-f_{3}&\phantom{-}0&\phantom{-}0
\end{psmatrix}
\begin{psmatrix}
\phantom{-}0&\phantom{-}0&\phantom{-}f_{1}&\phantom{-}0\\
\phantom{-}0&\phantom{-}0&\phantom{-}0&\phantom{-}f_{2}\\
f_{1}&\phantom{-}0&\phantom{-}0&\phantom{-}f_{3}\\
\phantom{-}0&-f_{2}&-f_{3}&\phantom{-}0
\end{psmatrix}
\end{align*}
\begin{center}type F\end{center}
\end{minipage}
\begin{minipage}[b]{0.48\columnwidth}
\begin{align*}
\begin{psmatrix}
\phantom{-}0&\phantom{-}c_{1}&\phantom{-}c_{2}&\phantom{-}0\\
c_{1}&\phantom{-}0&\phantom{-}0&\phantom{-}c_{3}\\
c_{2}&\phantom{-}0&\phantom{-}0&\phantom{-}0\\
\phantom{-}0&-c_{3}&\phantom{-}0&\phantom{-}0
\end{psmatrix}
\begin{psmatrix}
\phantom{-}0&\phantom{-}c_{1}&\phantom{-}c_{2}&\phantom{-}0\\
c_{1}&\phantom{-}0&\phantom{-}0&\phantom{-}0\\
c_{2}&\phantom{-}0&\phantom{-}0&\phantom{-}c_{3}\\
\phantom{-}0&\phantom{-}0&-c_{3}&\phantom{-}0
\end{psmatrix}
\begin{psmatrix}
\phantom{-}0&\phantom{-}c_{1}&\phantom{-}0&\phantom{-}c_{2}\\
c_{1}&\phantom{-}0&\phantom{-}0&\phantom{-}0\\
\phantom{-}0&\phantom{-}0&\phantom{-}0&\phantom{-}c_{3}\\
c_{2}&\phantom{-}0&-c_{3}&\phantom{-}0
\end{psmatrix}\\
\begin{psmatrix}
\phantom{-}0&\phantom{-}c_{1}&\phantom{-}0&\phantom{-}c_{2}\\
c_{1}&\phantom{-}0&\phantom{-}c_{3}&\phantom{-}0\\
\phantom{-}0&-c_{3}&\phantom{-}0&\phantom{-}0\\
c_{2}&\phantom{-}0&\phantom{-}0&\phantom{-}0
\end{psmatrix}
\begin{psmatrix}
\phantom{-}0&\phantom{-}0&\phantom{-}c_{1}&\phantom{-}c_{2}\\
\phantom{-}0&\phantom{-}0&\phantom{-}c_{3}&\phantom{-}0\\
c_{1}&-c_{3}&\phantom{-}0&\phantom{-}0\\
c_{2}&\phantom{-}0&\phantom{-}0&\phantom{-}0
\end{psmatrix}\begin{psmatrix}
\phantom{-}0&\phantom{-}0&\phantom{-}c_{1}&\phantom{-}c_{2}\\
\phantom{-}0&\phantom{-}0&\phantom{-}0&\phantom{-}c_{3}\\
c_{1}&\phantom{-}0&\phantom{-}0&\phantom{-}0\\
c_{2}&-c_{3}&\phantom{-}0&\phantom{-}0
\end{psmatrix}
\end{align*}
\begin{center}type C\end{center}
\end{minipage}
\begin{minipage}[b]{0.48\columnwidth}
\[\begin{psmatrix}
\phantom{-}0&\phantom{-}d_{1}&\phantom{-}0&\phantom{-}0\\
d_{1}&\phantom{-}0&\phantom{-}d_{2}&\phantom{-}d_{3}\\
\phantom{-}0&-d_{2}&\phantom{-}0&\phantom{-}0\\
\phantom{-}0&-d_{3}&\phantom{-}0&\phantom{-}0
\end{psmatrix}
\begin{psmatrix}
\phantom{-}0&\phantom{-}0&\phantom{-}d_{1}&\phantom{-}0\\
\phantom{-}0&\phantom{-}0&\phantom{-}d_{2}&\phantom{-}0\\
d_{1}&-d_{2}&\phantom{-}0&\phantom{-}d_{3}\\
\phantom{-}0&\phantom{-}0&-d_{3}&\phantom{-}0
\end{psmatrix}
\begin{psmatrix}
\phantom{-}0&\phantom{-}0&\phantom{-}0&\phantom{-}d_{1}\\
\phantom{-}0&\phantom{-}0&\phantom{-}0&\phantom{-}d_{2}\\
\phantom{-}0&\phantom{-}0&\phantom{-}0&\phantom{-}d_{3}\\
d_{1}&-d_{2}&-d_{3}&\phantom{-}0\\
\end{psmatrix}
\]
\begin{center}type D\end{center}
\end{minipage}
\begin{minipage}[b]{0.48\columnwidth}
\[\begin{psmatrix}
\phantom{-}0&\phantom{-}b_{1}&\phantom{-}b_{2}&\phantom{-}b_{3}\\
b_{1}&\phantom{-}0&\phantom{-}0&\phantom{-}0\\
b_{2}&\phantom{-}0&\phantom{-}0&\phantom{-}0\\
b_{3}&\phantom{-}0&\phantom{-}0&\phantom{-}0
\end{psmatrix}\]
\begin{center}type B\end{center}
\end{minipage}\\
In 4-dimensional Euclidean space, a skew-symmetric matrix appears as a coefficient matrix. On the other hand, in 4-dimensional Lorentz space, coefficient matrices are as above.
A frame of type B has a symmetric coefficient matrix. 
The proofs of hierarchy of frames in Lorentz space are almost the same as the proofs in Euclidean space but the characterization of curves which admit a frame of type D was different and it was the most important theorem for leading other theorem. First we will see Bishop's theorem in the Lorentz space following \cite{MR}.
\begin{lemma}\label{lemma:0}
Every $C^{2}$ regular time-like curve in $\LL^{4}$ admits a frame of type B. 
\end{lemma}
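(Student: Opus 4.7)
The plan is to mimic Bishop's parallel-transport construction, adapted to the Lorentzian setting. First I would fix a base point $s_{0} \in I$ and, using Lemma \ref{lorentz}, choose an orthonormal basis $\{\mathbb{B}_{1}(s_{0}), \mathbb{B}_{2}(s_{0}), \mathbb{B}_{3}(s_{0})\}$ of the $3$-dimensional space-like subspace $\mathbb{T}(s_{0})^{\perp}$. The goal is then to extend each $\mathbb{B}_{i}(s_{0})$ to a $C^{1}$ vector field $\mathbb{B}_{i}$ along $\gamma$ whose derivative is a scalar multiple of $\mathbb{T}$ at every point --- this is precisely the ``rotation minimizing'' condition in the normal bundle that makes the resulting coefficient matrix vanish off the first row and column.

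To accomplish this, I would solve, for each $i$, the linear ODE
\[
\mathbb{B}_{i}'(s) \;=\; \bigl\langle \mathbb{B}_{i}(s), \mathbb{T}'(s)\bigr\rangle\, \mathbb{T}(s),
\]
with the prescribed initial value at $s_{0}$. Because $\gamma$ is $C^{2}$, the coefficient depends continuously on $s$, so standard linear ODE theory gives a unique global $C^{1}$ solution on $I$. Next I would verify that the orthonormal relations propagate. Using $\langle \mathbb{T},\mathbb{T}\rangle = -1$, a direct computation gives $\tfrac{d}{ds}\langle\mathbb{B}_{i},\mathbb{T}\rangle = -\langle\mathbb{B}_{i},\mathbb{T}'\rangle + \langle\mathbb{B}_{i},\mathbb{T}'\rangle = 0$; with orthogonality to $\mathbb{T}$ then in hand, one gets $\tfrac{d}{ds}\langle\mathbb{B}_{i},\mathbb{B}_{j}\rangle = 0$ as well, so $\{\mathbb{T}, \mathbb{B}_{1}, \mathbb{B}_{2}, \mathbb{B}_{3}\}$ remains orthonormal throughout $I$.

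Finally, setting $b_{i}(s) := \langle \mathbb{T}'(s), \mathbb{B}_{i}(s)\rangle$, I would read off the coefficient matrix. Since $\langle\mathbb{T}',\mathbb{T}\rangle = 0$, the first row expands as $\mathbb{T}' = b_{1}\mathbb{B}_{1} + b_{2}\mathbb{B}_{2} + b_{3}\mathbb{B}_{3}$, while the defining ODE gives $\mathbb{B}_{i}' = b_{i}\mathbb{T}$ for the remaining rows; this is exactly the type-B shape. The only step where the Lorentzian setting plays a nontrivial role, and hence the only place where I expect real care to be needed, is in deriving the correct form of the ODE itself: the minus sign coming from $\langle\mathbb{T},\mathbb{T}\rangle = -1$ must be tracked, and Lemma \ref{lorentz} is essential because it guarantees that the normal space is positive-definite, so that each $\mathbb{B}_{i}$ can be unit-normalized in the usual sense and the coefficient $\lambda_{i}$ in $\mathbb{B}_{i}' = \lambda_{i}\mathbb{T}$ coincides with $b_{i}$ rather than $-b_{i}$. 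Beyond that bookkeeping, the argument is a routine application of linear ODE existence and uniqueness.
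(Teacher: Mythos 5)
Your proposal is correct and follows essentially the same route as the paper: the same parallel-transport ODE $\mathbb{B}_{i}' = \langle\mathbb{T}',\mathbb{B}_{i}\rangle\,\mathbb{T}$ with an initial orthonormal basis supplied by Lemma \ref{lorentz}, the same propagation of orthonormality using $\langle\mathbb{T},\mathbb{T}\rangle=-1$, and the same identification $b_{i}=\langle\mathbb{T}',\mathbb{B}_{i}\rangle$ yielding the symmetric type-B coefficient matrix. Your closing remark about the sign bookkeeping is exactly where the paper's computation also uses the Lorentzian signature.
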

\begin{proof}
Let $\gamma$ be a $C^{2}$ regular time-like curve in $\mathbb{L}^4$. By Lemma 2.1, there exist space-like vectors at a point $\gamma(s_0)$, and we can construct a basis $\{\mathbb{T}^0, \mathbb{B}_1^0, \mathbb{B}_2^0, \mathbb{B}_3^0\}$ at $\gamma(s_0)$ using Gram–Schmidt orthonormalization in Lorentz space with using suitable vectors and we have the following conditions. \[\langle\mathbb{T}^0, \mathbb{T}^0\rangle=-1,\quad \langle\mathbb{B}^{0}_i, \mathbb{B}^{0}_{j}\rangle=\delta_{ij} ,\quad \langle\mathbb{T}^0, \mathbb{B}^{0}_{i}\rangle=0.\] Next we consider the following differential equations for a vector-valued function of 
class $C^{1}$ with respect to s:
\[
\mathbb{B}_1^{'}(s) = \langle\mathbb{T}^{\prime}(s), \mathbb{B}_{1}(s)\rangle\mathbb{T}(s), \quad \mathbb{B}_2^{'}(s) = \langle\mathbb{T}^{\prime}(s), \mathbb{B}_{2}(s)\rangle\mathbb{T}(s), \quad \mathbb{B}_3^{'}(s) = \langle\mathbb{T}^{\prime}(s), \mathbb{B}_{3}(s)\rangle\mathbb{T}(s)
\]
with the initial conditions:
\[
\mathbb{T}(s_0) = \mathbb{T}^0, \quad \mathbb{B}_1(s_0) = \mathbb{B}_1^0, \quad \mathbb{B}_2(s_0) = \mathbb{B}_2^0, \quad \mathbb{B}_3(s_0) = \mathbb{B}_3^0
.\]
By existence and uniqueness of solutions for ODEs, we have its solutions locally.
For investigating $B_{1}(s), B_{2}(s)$ and $B_{3}(s)$ are constructing an orthonormal moving frame, 
first we differentiate $\langle\mathbb{T}(s), \mathbb{T}
(s)\rangle=-1$ with s:
\[
(\langle\mathbb{T}(s), \mathbb{T}
(s)\rangle)^{\prime}=2\langle\mathbb{T}^{\prime}(s), \mathbb{T}(s)\rangle=0    
.\]
Therefore $\mathbb{T}^{\prime}(s)$ and $\mathbb{T}(s)$ are orthogonal to each other.
We differentiate $\langle \mathbb{T}(s),\mathbb{B}_1(s)\rangle$ with $s$:
\begin{align*}
\left( \langle \mathbb{T}(s), \mathbb{B}_1(s) \rangle \right)' 
= &\langle\mathbb{T}^{\prime}(s), \mathbb{B}_{1}(s)\rangle+\langle\mathbb{T}(s), \mathbb{B}_{1}^{\prime}(s)\rangle\\
=&\langle\mathbb{T}^{\prime}(s), \mathbb{B}_{1}(s)\rangle+\langle\mathbb{T}(s), \langle\mathbb{T}^{\prime}(s), \mathbb{B}_{1}(s)\rangle\mathbb{T}(s)\rangle\\
=&\langle\mathbb{T}^{\prime}(s), \mathbb{B}_{1}(s)\rangle- \langle\mathbb{T}^{\prime}(s), \mathbb{B}_{1}(s)\rangle\\
=&0
.\end{align*}
Hence $\langle\mathbb{T}(s), \mathbb{B}_{1}(s)\rangle$ is identically zero and
 $\mathbb{T}(s)$ and $\mathbb{B}_1(s)$ remain orthogonal. Similarly, $\mathbb{B}_2(s)$ and $\mathbb{B}_3(s)$ are also orthogonal to $\mathbb{T}(s)$.
Next we differentiate $\langle \mathbb{B}_{1}(s),\mathbb{B}_1(s)\rangle$ with $s$:
\begin{align*}
\left( \langle \mathbb{B}_{1}(s), \mathbb{B}_1(s) \rangle \right)' 
= &2\langle\mathbb{B}^{\prime}_{1}(s), \mathbb{B}_{1}(s)\rangle\\
=&2\langle\langle\mathbb{T}^{\prime}(s), \mathbb{B}_{1}(s)\rangle\mathbb{T}(s), \mathbb{B}_{1}(s)\rangle\\
=&0.
\end{align*}
Therefore $\langle\mathbb{B}_{1}(s), \BB_{1}(s)\rangle$ is identically one. $\langle\mathbb{B}_{2}(s),\mathbb{B}_{2}(s)\rangle$ and $\langle\mathbb{B}_{3}(s),\mathbb{B}_{3}(s)\rangle$ are also identically one. 
Next we differentiate $\langle \mathbb{B}_{i}(s),\mathbb{B}_{j}(s)\rangle\,(i\neq j)$ with $s$:
\begin{align*}
\left( \langle \mathbb{B}_{i}(s), \mathbb{B}_{j}(s) \rangle \right)' 
= &\langle\mathbb{B}^{\prime}_{i}(s), \mathbb{B}_{j}(s)\rangle+\langle\mathbb{B}_{i}(s), \mathbb{B}^{\prime}_{j}(s)\rangle\\
=&\langle\langle\mathbb{T}^{\prime}(s), \mathbb{B}_{i}(s)\rangle\mathbb{T}(s), \mathbb{B}_{j}(s)\rangle+\langle\mathbb{B}_{i}(s), \langle\mathbb{T}^{\prime}(s), \mathbb{B}_{j}(s)\rangle\mathbb{T}(s)\rangle\\
=&0.
\end{align*}
Therefore $\mathbb{B}_{i}(s)$ and $\mathbb{B}_{j}(s)$ are orthogonal to each other.
Thus $\{\mathbb{T}(s), \mathbb{B}_{1}(s), \mathbb{B}_{2}(s), \mathbb{B}_{3}(s)\}$ is an orthonormal moving frame. We will express $\mathbb{T}^{\prime}(s)$ with the moving frame and we express it as $\mathbb{T}^{\prime}(s)=\alpha_{1}(s)\mathbb{B}_{1}(s)+\alpha_{2}(s)\mathbb{B}_{2}(s)+\alpha_{3}(s)\mathbb{B}_{3}(s)$. Since $\langle\TT(s), \BB_{i}(s)\rangle^{\prime}=0\, (i=1,2,3)$, we find $\alpha_{1}(s)=\langle\mathbb{T}^{\prime}(s), \mathbb{B}_{1}(s)\rangle, \alpha_{2}(s)=\langle\mathbb{T}^{\prime}(s), \mathbb{B}_{2}(s)\rangle$ and $\alpha_{3}(s)=\langle\mathbb{T}^{\prime}(s), \mathbb{B}_{3}(s)\rangle$. Here we put $b_{1}(s)\coloneqq \langle\mathbb{T}^{\prime}(s), \mathbb{B}_{1}(s)\rangle, b_{2}(s)\coloneqq \langle\mathbb{T}^{\prime}(s), \mathbb{B}_{2}(s)\rangle$ and $b_{3}(s)\coloneqq \langle\mathbb{T}^{\prime}(s), \mathbb{B}_{3}(s)\rangle$. Thus, we have constructed a frame of type B.
\end{proof}
\begin{lemma}\label{lemma:1}
Every $C^{2}$ regular space-like curve in $\LL^{4}$ admits a frame of type B whose first normal vector $\mathbb{B}_{1}(s)$ is time-like. 
\end{lemma}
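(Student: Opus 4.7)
The plan is to mirror the proof of Lemma \ref{lemma:0}, adjusting only for the opposite signature of the tangent vector and for the requirement that $\mathbb{B}_{1}$ be time-like. At a chosen point $\gamma(s_{0})$, I would first invoke Lemma \ref{lorentz}(3): since $\mathbb{T}(s_{0})$ is space-like, the Lorentz metric restricted to $\mathbb{T}(s_{0})^{\perp}$ has signature $(2,1)$, so a Lorentzian Gram--Schmidt inside $\mathbb{T}(s_{0})^{\perp}$ furnishes initial data $\{\mathbb{T}^{0},\mathbb{B}_{1}^{0},\mathbb{B}_{2}^{0},\mathbb{B}_{3}^{0}\}$ with $\langle\mathbb{T}^{0},\mathbb{T}^{0}\rangle=1$, $\langle\mathbb{B}_{1}^{0},\mathbb{B}_{1}^{0}\rangle=-1$, $\langle\mathbb{B}_{i}^{0},\mathbb{B}_{i}^{0}\rangle=1$ for $i=2,3$, and all cross inner products vanishing.

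Next, I would propagate this basis along $\gamma$ by solving the linear ODE system
\[
\mathbb{B}_{i}'(s) \;=\; -\,\langle \mathbb{T}'(s),\mathbb{B}_{i}(s)\rangle\,\mathbb{T}(s),\qquad i=1,2,3,
\]
with $\mathbb{B}_{i}(s_{0})=\mathbb{B}_{i}^{0}$. The sign is the opposite of the one in Lemma \ref{lemma:0}, and this is precisely because $\langle\mathbb{T},\mathbb{T}\rangle=+1$ in the space-like case rather than $-1$. Standard existence and uniqueness for linear ODEs yields $C^{1}$ solutions on all of $I$. The verification that $\{\mathbb{T},\mathbb{B}_{1},\mathbb{B}_{2},\mathbb{B}_{3}\}$ remains an orthonormal moving frame with the prescribed signatures then repeats the chain of computations from Lemma \ref{lemma:0} essentially verbatim: differentiating $\langle\mathbb{T},\mathbb{B}_{i}\rangle$ and substituting the ODE, together with $\langle\mathbb{T},\mathbb{T}\rangle\equiv 1$, shows that the $\mathbb{B}_{i}$ remain orthogonal to $\mathbb{T}$; next, differentiating $\langle\mathbb{B}_{i},\mathbb{B}_{i}\rangle$ and $\langle\mathbb{B}_{i},\mathbb{B}_{j}\rangle$ for $i\neq j$ and using the orthogonality just established shows these inner products are constant, hence equal to their initial values. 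In particular $\langle\mathbb{B}_{1},\mathbb{B}_{1}\rangle\equiv -1$, so $\mathbb{B}_{1}(s)$ remains time-like along the entire curve.

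Finally, I would expand $\mathbb{T}'(s)=\beta_{1}(s)\mathbb{B}_{1}(s)+\beta_{2}(s)\mathbb{B}_{2}(s)+\beta_{3}(s)\mathbb{B}_{3}(s)$ (there is no $\mathbb{T}$-component since $\mathbb{T}'\perp\mathbb{T}$) and read off the entries of the coefficient matrix from this expansion and from the ODE above. The resulting matrix has all its nonzero entries confined to the first row and first column, which is exactly the structural pattern of a frame of type B; the actual values along the first column differ only by signature-dictated signs from $\beta_{i}$, which is harmless. The proof poses no substantive obstacle: the only delicate point is tracking the signature-induced signs consistently, both in choosing the correct right-hand side of the ODE and in confirming that the time-like character of $\mathbb{B}_{1}$ is preserved by the flow.
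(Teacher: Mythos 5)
Your proposal is correct and follows essentially the same route as the paper: choose initial data at $\gamma(s_{0})$ with $\mathbb{B}_{1}^{0}$ time-like via Lemma \ref{lorentz}, propagate by the ODEs $\mathbb{B}_{i}'=-\langle\mathbb{T}',\mathbb{B}_{i}\rangle\mathbb{T}$ (the sign flip coming from $\langle\mathbb{T},\mathbb{T}\rangle=+1$), check that orthonormality and the signature are preserved, and read off the type-B coefficient matrix from the expansion of $\mathbb{T}'$. The signature-dictated signs you note (in particular $\beta_{1}=-\langle\mathbb{T}',\mathbb{B}_{1}\rangle$ because $\mathbb{B}_{1}$ is time-like) match the paper's computation exactly.
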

\begin{proof}

Let $\gamma$ be a $C^{\infty}$ space-like curve in $\mathbb{L}^4$. By Lemma 2.1, there exist space-like vectors and a time-like vector at a point $\gamma(s_0)$, and we can construct a basis $\{\mathbb{T}^0, \mathbb{B}_1^0, \mathbb{B}_2^0, \mathbb{B}_3^0\}$ at $\gamma(s_0)$ using Gram–Schmidt orthonormalization in Lorentz space with using suitable vectors. Without loss of generality, we may assume that $\mathbb{B}_{1}^{0}$ is chosen as a time‑like vector. We have the following conditions. 
\begin{align*}
\langle\mathbb{T}^0, \mathbb{T}^0\rangle &=1, &&& \langle\mathbb{B}^{0}_1, \mathbb{B}^{0}_{1}\rangle & =-1, &&& \langle\mathbb{B}^{0}_1,\mathbb{B}^{0}_{i}\rangle & =0\,(i=2,3), 
\end{align*}

\vspace{-20pt}

\begin{align*}
\langle\mathbb{B}^{0}_i, \mathbb{B}^{0}_{j}\rangle &=\delta_{ij}\,(i,j\in\{2,3\}), & \langle\mathbb{T}^0, \mathbb{B}^{0}_{i}\rangle &=0\,(i=1,2,3).
\end{align*}
Next we consider the following differential equations for vector-valued functions of 
class $C^{\infty}$ with respect to s:
\begin{align*}
\mathbb{B}_1^{'}(s) & = -\langle\mathbb{T}^{\prime}(s), \mathbb{B}_{1}(s)\rangle\mathbb{T}(s), &&&  \mathbb{B}_2^{'}(s) &= -\langle\mathbb{T}^{\prime}(s), \mathbb{B}_{2}(s)\rangle\mathbb{T}(s), &&& \mathbb{B}_3^{'}(s) & =-\langle\mathbb{T}^{\prime}(s), \mathbb{B}_{3}(s)\rangle\mathbb{T}(s)
\end{align*}
with the initial conditions:
\[
\mathbb{T}(s_0) = \mathbb{T}^0, \quad \mathbb{B}_1(s_0) = \mathbb{B}_1^0, \quad \mathbb{B}_2(s_0) = \mathbb{B}_2^0, \quad \mathbb{B}_3(s_0) = \mathbb{B}_3^0
\]
We can construct a frame of type B in almost the same way as in the time-like case. 
Therefore, we focus only on the parts that differ in the space-like case. 
Let
\[
\mathbb{T}'(s)=\alpha_{1}(s)\mathbb{B}_1(s)+\alpha_{2}(s)\mathbb{B}_2(s)+\alpha_{3}(s)\mathbb{B}_3(s).
\] 
Since $\langle\TT(s), \BB_{i}(s)\rangle=0\,(i=1,2,3)$, we find $\alpha_{1}(s)=-\langle\mathbb{T}^{\prime}(s), \mathbb{B}_{1}(s)\rangle, \alpha_{2}(s)=\langle\mathbb{T}^{\prime}(s), \mathbb{B}_{2}(s)\rangle$ and $\alpha_{3}(s)=\langle\mathbb{T}^{\prime}(s), \mathbb{B}_{3}(s)\rangle$. Here we put $b_{1}(s)\coloneqq-\langle\mathbb{T}^{\prime}(s), \BB_{1}(s)\rangle, b_{2}(s)\coloneqq\langle\mathbb{T}^{\prime}(s),\mathbb{B}_{2}(s)\rangle$ and $b_{3}(s)\coloneqq\langle\TT^{\prime}(s), \BB_{3}(s)\rangle.$ Thus, we have constructed a frame of type B.  
\end{proof}
If $\gamma$ is a time-like curve, the associated frame $\ZZ$ is in $O(1, 3)$. In \cite{NN}, we were able to switch the tangent vector and normal vector because there is no distinction between vectors with respect to the metric. However, for time-like curves, such a switch cannot be carried out using the original frames. For a proof of  characterization of frame of type D, we will use an existence of a fame of type B on a space-like curve in $\LL^{4}$.
\begin{proposition}\label{proposition:1}
For a regular time-like curve $\gamma$ in $\LL^{4}$, the following are equivalent:
\begin{enumerate}
    \item $\gamma$ admits a frame of type D.
    \item There exists a smooth unit normal space-like vector field $\mathbb{D}_{1}$ and a smooth function $d_1$ such that  $\mathbb{T}^{\prime}=d_{1}\mathbb{D}_{1}$.
\end{enumerate}
\end{proposition}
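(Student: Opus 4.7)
\textit{Plan.} The implication $(1)\Rightarrow(2)$ should be immediate: reading off the first row of the type D coefficient matrix gives $\TT'=d_1\DD_1$, and $\DD_1$ is a unit space-like normal vector field by Lemma \ref{lorentz} applied to the time-like tangent $\TT$.

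For $(2)\Rightarrow(1)$, my plan is to imitate the space-like Bishop construction of Lemma \ref{lemma:1}, with the roles reversed: $\DD_1$ (space-like) will play the distinguished role that the tangent vector takes in that lemma, while $\TT$ (time-like) will play the role analogous to the time-like first normal. Pick a point $s_0\in I$. Using Lemma \ref{lorentz} and Gram--Schmidt, I would extend $\{\TT(s_0),\DD_1(s_0)\}$ to a Lorentz-orthonormal basis $\{\TT(s_0),\DD_1(s_0),\DD_2^0,\DD_3^0\}$ of $\LL^4$, with the two additional vectors space-like. Then I would solve the linear ODE system
\[
\DD_i'(s) \;=\; -\langle \DD_1'(s),\DD_i(s)\rangle\,\DD_1(s),\qquad \DD_i(s_0)=\DD_i^0, \quad i=2,3,
\]
whose unique smooth solutions exist on all of $I$ by standard ODE theory.

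It then remains to show that $\{\TT,\DD_1,\DD_2,\DD_3\}$ is orthonormal on $I$ and that the resulting coefficient matrix has type D shape. The identities $\langle\DD_i,\DD_j\rangle=\delta_{ij}$ and $\langle\DD_i,\DD_1\rangle=0$ for $i,j\in\{2,3\}$ follow routinely by differentiating and applying the ODE, exactly as in Lemma \ref{lemma:1}. The step I expect to be the main obstacle—and where the hypothesis $\TT'=d_1\DD_1$ is indispensable—is the orthogonality $\langle\TT,\DD_i\rangle\equiv 0$. I would introduce $u(s)=\langle\TT,\DD_i\rangle$ and $v(s)=\langle\DD_1,\DD_i\rangle$ and derive the coupled system $v'\equiv 0$ and $u'=d_1\,v$ (the first using $\|\DD_1\|=1$ and the ODE, the second using $\TT'=d_1\DD_1$ together with $\TT\perp\DD_1$); since $u(s_0)=v(s_0)=0$, both vanish identically.

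Finally, I would read off the coefficient matrix. Setting $d_i\coloneqq\langle\DD_1',\DD_i\rangle$ for $i=2,3$, the ODE gives $\DD_i'=-d_i\DD_1$ directly. Expanding $\DD_1'$ in the Lorentz-orthonormal basis $\{\TT,\DD_1,\DD_2,\DD_3\}$ (with the $\TT$-component picking up a sign from $\langle\TT,\TT\rangle=-1$) and using $\langle\DD_1',\TT\rangle=-\langle\DD_1,\TT'\rangle=-d_1$, I obtain $\DD_1'=d_1\TT+d_2\DD_2+d_3\DD_3$. Combined with $\TT'=d_1\DD_1$ and $\DD_i'=-d_i\DD_1$, this is exactly the type D shape, completing the proof.
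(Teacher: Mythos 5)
Your proof is correct and follows essentially the same route as the paper: the paper obtains the type D frame by applying Lemma \ref{lemma:1} to an auxiliary space-like curve whose tangent is $\mathbb{D}_1$ and then swapping the first two frame vectors, which amounts to precisely the ODE construction $\mathbb{D}_i'=-\langle\mathbb{D}_1',\mathbb{D}_i\rangle\mathbb{D}_1$ that you carry out directly along $\gamma$. Your inlined version has the minor advantage of making explicit the check (left implicit in the paper) that $\mathbb{T}$ itself serves as the time-like member of the resulting frame, via the system $v'\equiv 0$, $u'=d_1 v$.
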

\begin{proof}
Clearly (1) implies (2). Let us show (1) assuming (2). Let $\gamma$ be a regular time-like curve which satisfies the condition (2). Let $\DD_{1}$ be a unit normal space-like vector field on $\gamma$ such that $\TT^{\prime}=d_{1}\DD_{1}$. Let us consider a regular space-like curve $\delta$ whose tangent vector is $\DD_{1}$. By Lemma \ref{lemma:1}, $\delta$ admits a frame of type B of the form $\{\DD_{1}, \TT, \DD_{2}, \DD_{3}\}$, where $\TT$ is a time-like vector. Then we have
\[\begin{pmatrix}
\DD_{1}^{\prime}\\
\TT^{\prime}\\
\DD_{2}^{\prime}\\
\DD_{3}^{\prime}
\end{pmatrix}=\begin{pmatrix}
0&d_{1}&x_{2}&x_{3}\\
d_{1}&0&0&0\\
-x_{2}&0&0&0\\
-x_{3}&0&0&0
\end{pmatrix}\begin{pmatrix}
\DD_{1}\\
\TT\\
\DD_{2}\\
\DD_{3}
\end{pmatrix}\]
for some functions $d_{1}, x_{2}, x_{3}$. We will consider the frame obtained by switching the first and second vectors and translating the frame along $\gamma$. We have a frame of type D of the following form on $\gamma$
\[\begin{pmatrix}
\TT^{\prime}\\
\DD_{1}^{\prime}\\
\DD_{2}^{\prime}\\
\DD_{3}^{\prime}
\end{pmatrix}=\begin{pmatrix}
0&d_{1}&0&0\\
d_{1}&0&x_{2}&x_{3}\\
0&-x_{2}&0&0\\
0&-x_{3}&0&0
\end{pmatrix}\begin{pmatrix}
\TT\\
\DD_{1}\\
\DD_{2}\\
\DD_{3}
\end{pmatrix}\].
\end{proof}
First, the author considered  we cannot take the similar way of Euclidean case because we need two time-like curve for switching a tangent vector and normal vector. Hiraku Nozawa suggested some advice, we were able to prove Proposition \ref{proposition:1}. By Proposition \ref{proposition:1}, we have a combination of Theorem \ref{thm:hier}-(1) and a part of Corollary \ref{thm:2reg}. As an immediate consequence, the following corollary is obtained.
\begin{corollary}\label{cor:2regD}
Let $\gamma$ be a regular time-like curve in $\LL^4$. If either $\gamma$ is $2$-regular or $\gamma$ admits a frame of type F, then $\gamma$ admits a frame of type D.
\end{corollary}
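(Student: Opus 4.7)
The plan is to split the statement into its two hypotheses and dispose of each using the tools already built up in the section. In both cases the goal, by Proposition \ref{proposition:1}, reduces to exhibiting a smooth unit space-like vector field $\DD_1$ normal to $\gamma$ together with a smooth function $d_1$ satisfying $\TT' = d_1 \DD_1$.

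For the case when $\gamma$ admits a frame of type F, there is literally nothing to do: this is exactly the content of Theorem \ref{thm:hier}(1), so one simply invokes it.

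For the 2-regular case, the plan is to produce the data required by Proposition \ref{proposition:1} directly from the definition of 2-regularity. Differentiating the identity $\langle \TT, \TT \rangle = -1$ yields $\langle \TT', \TT \rangle = 0$, so at every point $\TT'$ lies in the orthogonal complement $\TT^{\perp}$. Since $\TT$ is time-like, Lemma \ref{lorentz}(2) guarantees that $\TT^{\perp}$ is a space-like subspace, which forces $\TT'$ to be space-like at every point. Because $\gamma$ is 2-regular, $\TT'$ is moreover nowhere zero, so the scalar $d_1 \coloneqq \|\TT'\| = \sqrt{\langle \TT', \TT' \rangle}$ is a strictly positive smooth function and $\DD_1 \coloneqq \TT'/d_1$ is a well-defined smooth unit space-like vector field on $\gamma$, normal to $\TT$ and satisfying $\TT' = d_1 \DD_1$. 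Applying Proposition \ref{proposition:1} then produces a frame of type D.

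I do not expect any serious obstacle: the real work has already been absorbed into Proposition \ref{proposition:1} and Theorem \ref{thm:hier}(1), and the only new observation needed is that for a 2-regular time-like curve the vector $\TT'$ is automatically space-like (so that $\TT'/\|\TT'\|$ makes sense as a unit normal), which is precisely where Lemma \ref{lorentz}(2) is applied.
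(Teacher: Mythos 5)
Your proposal is correct and follows essentially the same route as the paper: the paper also obtains this corollary as an immediate consequence of Proposition \ref{proposition:1}, with the type-F case being exactly Theorem \ref{thm:hier}(1) and the $2$-regular case handled by normalizing $\TT'$, which is space-like by Lemma \ref{lorentz}(2) and nowhere zero by $2$-regularity. Your write-up just makes explicit the details the paper leaves to the reader.
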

By Proposition \ref{proposition:1}, we can easily construct an example of a regular time-like curve that does not admit a frame of type D. This proves Theorem \ref{thm:noCD}-(2) and is illustrated by Example \ref{ex:noD}.
\begin{example}\label{ex:noD}
Let $\gamma$ be a regular time-like curve in $\LL^{4}$ defined by
\begin{equation}\label{eq:gamma}
\gamma(t)=
\begin{cases}(t, e^{-\frac{1}{t}},0,0) & t>0\\
(0, 0, 0, 0) & t=0\\
(t,0, e^{\frac{1}{t}},0) & t<0.
\end{cases}
\end{equation}
Note that the parameter of $\gamma$ is not a proper time. As in \cite{NN}, this example is a time-like curve which does not admit a frame of type D but a frame of type C.
\end{example}

We can consider a characterization of curves that admit a frame of type C but it is more involved because we use a frame of type F. It dose not always exist for a given curve.

\begin{proposition}\label{prop:charC}
A regular time-like curve $\gamma$ admits a frame of type C if and only if there exists a unit normal vector field $\ZZ_{0}$ on $\gamma$ such that a space-like curve whose tangent vector is $\ZZ_{0}$ admits a frame of type F of the form $\{\ZZ_{0}, \TT, \ZZ_{1}, \ZZ_{3}\}$, or of the form $\{\ZZ_{0}, \ZZ_{1}, \TT, \ZZ_{3}\}$
whose coefficient matrix is of the form respectively.
\begin{equation*}\label{eq:frenet}
\begin{pmatrix}
\phantom{-}0&\phantom{-}f_{1}&\phantom{-}0&\phantom{-}0\\
\phantom{-}f_{1}&\phantom{-}0&\phantom{-}f_{2}&\phantom{-}0\\
\phantom{-}0&\phantom{-}f_{2}&\phantom{-}0&\phantom{-}f_{3}\\
\phantom{-}0&\phantom{-}0&-f_{3}&\phantom{-}0
\end{pmatrix}\,or\,\begin{pmatrix}
\phantom{-}0&\phantom{-}f_{1}&\phantom{-}0&\phantom{-}0\\
\phantom{-}-f_{1}&\phantom{-}0&\phantom{-}f_{2}&\phantom{-}0\\
\phantom{-}0&\phantom{-}f_{2}&\phantom{-}0&\phantom{-}f_{3}\\
\phantom{-}0&\phantom{-}0&f_{3}&\phantom{-}0
\end{pmatrix}.
\end{equation*}
\end{proposition}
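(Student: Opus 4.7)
The plan is to adapt the vector-swap technique used in the proof of Proposition \ref{proposition:1}, trading the tangent vector $\TT$ of $\gamma$ with a chosen unit space-like normal vector field $\ZZ_0$. Given such $\ZZ_0$, define $\delta$ by $\delta(s)=\delta_{0}+\int \ZZ_0(s)\,ds$; because $\ZZ_0$ is a unit space-like vector field, $\delta$ is a regular space-like curve whose arc-length parameter coincides with the parameter $s$ of $\gamma$. Consequently, any orthonormal family of vectors along $\gamma$ can be reinterpreted as an orthonormal family along $\delta$ with the same derivative structure, and reordering the vectors so that $\TT$ is first (for $\gamma$) or $\ZZ_0$ is first (for $\delta$) changes only the positions of the rows and columns of the coefficient matrix, modulo the signs dictated by the Lorentz signature.

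For the ``if'' direction, suppose such a $\ZZ_0$ exists together with a type F frame on $\delta$ of the first form $\{\ZZ_0,\TT,\ZZ_1,\ZZ_3\}$. Swap the first two vectors to obtain a frame $\{\TT,\ZZ_0,\ZZ_1,\ZZ_3\}$ along $\gamma$; the coefficient matrix is obtained by permutation-conjugation, and a direct calculation shows it has exactly the three nonzero upper-triangular entries required by (after an obvious reordering of the normal vectors) the first type C shape from Section \ref{sec:pre}. The second form $\{\ZZ_0,\ZZ_1,\TT,\ZZ_3\}$ is treated in the same way by swapping the first and third vectors and yields, after a permutation of normal vectors, one of the other listed type C shapes.

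For the ``only if'' direction, start from a type C frame on $\gamma$. Inspecting the six shapes in Section \ref{sec:pre}, the three nonzero upper-triangular entries form the edges of a four-vertex path graph in which $\TT$ has degree two, so it sits as an internal vertex. Take $\ZZ_0$ to be the leaf of this path adjacent to $\TT$: transporting $\ZZ_0$ to the first position of the frame on $\delta$ places $\TT$ at position 2, and the coefficient matrix reshapes into the first form stated in the proposition. If one instead takes $\ZZ_0$ to be the far leaf (at graph-distance two from $\TT$), the permutation places $\TT$ at position 3 and produces the second form. The positivity $f_1,f_2>0$ required by the definition of type F is arranged, if needed, by flipping the signs of $\ZZ_0$ and one of the remaining normal vectors, each of which affects only a predictable subset of entries while preserving the path structure.

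The main obstacle is bookkeeping: checking that the swap maps each of the six type C shapes to a type F shape of exactly one of the two displayed forms, and tracking the sign changes that depend on whether the two vectors involved in a given matrix entry are both space-like or include the time-like $\TT$. Unlike the Euclidean setting, the coefficient matrix is not uniformly antisymmetric, so moving the time-like slot from position 2 to position 1 (or from 3 to 1) requires simultaneously updating the signs of a whole column and a whole row. This is the delicate point where the Lorentz argument diverges from its Euclidean analogue in \cite{NN}.
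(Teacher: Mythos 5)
Your proposal is correct and takes essentially the same route as the paper: both directions amount to reordering the vectors of the frame so that the tangent of $\gamma$ (resp.\ of the auxiliary space-like curve with tangent $\ZZ_{0}$) occupies the first slot, and observing that the coefficient matrix is simply conjugated by the permutation into one of the type C (resp.\ type F) shapes listed in Section \ref{sec:pre}. The paper's proof is just a terser instance of this check for the first displayed form; your path-graph bookkeeping is a fine organizational device, and the sign verifications you flag do work out (indeed they are carried along automatically by the permutation, since the sign pattern of each entry is dictated by the causal characters of the two vectors involved, which the permutation does not change).
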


\begin{proof}
It is easy to see that the coefficient matrix of the frame $\{\ZZ_{0}, \TT, \ZZ_{1}, \ZZ_{3}\}$ is of the form left one if and only if we have
\[
\begin{pmatrix}
\mathbb{T}^{\prime}\\
\ZZ_{1}^{\prime}\\
\ZZ_{0}^{\prime}\\
\ZZ_{3}^{\prime}
\end{pmatrix}=\begin{pmatrix}
0&f_{2}&f_{1}&0\\
f_{2}&0&0&f_{3}\\
f_{1}&0&0&0\\
0&-f_{3}&0&0
\end{pmatrix}\begin{pmatrix}
\mathbb{T}\\
\ZZ_{1}\\
\ZZ_{0}\\
\ZZ_{3}
\end{pmatrix}\,.
\]
The latter hold if and only if $\{\TT,\ZZ_{0},\ZZ_{1}\ZZ_{3}\}$ is a frame of type C.
\end{proof}

The next statement is the relation among a frame of type F and the Frenet frame on a time-like curve. The proof is almost the same as in the Euclidean case in \cite{NN}.

\begin{proposition}\label{prop:FFrenet}
If a regular time-like curve $\gamma$ in $\LL^4$ admits the Frenet frame $\{\TT, \FF_1, \FF_2, \FF_3\}$, then every generalized Bishop frame $\{\TT, \ZZ_1, \ZZ_2, \ZZ_3\}$ of type F of $\gamma$ coincides with the Frenet frame up to sign, namely, we have $\ZZ_i = \epsilon_i \FF_i$ for some $\epsilon_i \in \{-1,1\}$ $(i=1,2,3)$. Every entry of the coefficient matrix of the frame $\{\TT, \ZZ_1, \ZZ_2, \ZZ_3\}$ is equal to the corresponding entry of the coefficient matrix of the Frenet frame up to sign. 
\end{proposition}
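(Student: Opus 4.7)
The plan is to exploit the tridiagonal structure of both coefficient matrices, determining each $\ZZ_i$ inductively from $\ZZ_{i-1}$ and $\ZZ_{i-2}$. Both frames start with the tangent vector $\TT = \FF_0 = \ZZ_0$, so comparing the first rows of the two matrices gives $\TT' = x_1 \ZZ_1 = f_1 \FF_1$. Since $\ZZ_1$ and $\FF_1$ are unit space-like normal vectors (by Lemma \ref{lorentz}, as they lie in $\TT^\perp$), taking norms yields $x_1^2 = f_1^2$, hence $|x_1| = f_1$. Because $f_1 > 0$ on $I$ and $x_1$ is continuous, its sign $\epsilon_1 \in \{-1, +1\}$ is constant on the connected interval $I$, so $\ZZ_1 = \epsilon_1 \FF_1$ and $x_1 = \epsilon_1 f_1$.

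Proceeding inductively, assume $\ZZ_j = \epsilon_j \FF_j$ for $j \le i-1$ (with $\epsilon_0 = 1$) and $x_j = \epsilon_{j-1}\epsilon_j f_j$. Differentiating $\ZZ_{i-1} = \epsilon_{i-1}\FF_{i-1}$ and expanding both sides using the respective coefficient matrices, the inductive hypothesis makes the terms in the basis vectors of index $\le i-2$ cancel, leaving the clean identity $x_i \ZZ_i = \epsilon_{i-1} f_i \FF_i$. For $i = 2$, positivity of $f_2$ ensures $x_2$ is nowhere zero, so $\epsilon_2 := x_2/(\epsilon_1 f_2) \in \{-1, +1\}$ is a global constant, giving $\ZZ_2 = \epsilon_2 \FF_2$ and $x_2 = \epsilon_1\epsilon_2 f_2$. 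For $i = 3$ the function $f_3$ may vanish, so this identity alone need not determine $\ZZ_3$ at every point; instead, orthonormality of both frames forces $\ZZ_3$ to lie in the one-dimensional subspace $\mathrm{span}(\FF_3)$ at each point, so $\ZZ_3 = \sigma \FF_3$ for a smooth function $\sigma \colon I \to \{-1,+1\}$, which is therefore constant. Setting $\epsilon_3 := \sigma$, the identity becomes $x_3 = \epsilon_2 \epsilon_3 f_3$.

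No serious obstacle is expected. The only delicate points are the global constancy of the signs $\epsilon_i$ — which relies on the strict positivity of $f_1, f_2$ to force $x_1, x_2$ to be nowhere zero, together with the connectedness of $I$ — and the handling of the possible zeros of $f_3$, which is resolved by the orthonormality argument rather than by direct division. The final assertion that every entry of the coefficient matrix of $\{\TT, \ZZ_1, \ZZ_2, \ZZ_3\}$ agrees with the corresponding entry of the Frenet coefficient matrix up to sign then follows at once from the three relations $x_i = \epsilon_{i-1}\epsilon_i f_i$ for $i = 1, 2, 3$, together with the fact that all off-diagonal entries outside the tridiagonal positions are zero in both matrices.
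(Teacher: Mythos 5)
Your proof is correct, and it follows the same route the paper intends: the paper omits the argument and defers to the Euclidean case in \cite{NN}, which is exactly this inductive, term-by-term comparison of the two tridiagonal coefficient matrices, using positivity of $f_1,f_2$ to pin down $\ZZ_1,\ZZ_2$ and orthonormality (together with Lemma \ref{lorentz}, which makes $\TT^{\perp}$ positive definite) to handle $\ZZ_3$ where $f_3$ may vanish. The delicate points you flag --- constancy of the signs on the connected interval and the separate treatment of $\ZZ_3$ --- are precisely the ones that matter, and you resolve them correctly.
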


Using the above proposition, we can show an example of a 2-regular time-like curve which does not admit a frame of type F. In the Euclidean case, we made a curve with the tangent vector field constructed by stereographic projection. This time, we made the curve in an easier way.
\begin{example}\label{ex:noF}
Consider a time-like curve $\gamma$ in $\LL^{4}$ whose tangent vector field is
\begin{equation}
\TT(s)=\begin{cases}
\displaystyle \left(\sqrt{s^{2}e^{-\frac{2}{s}}+s^{2}+1}, \, s e^{\frac{-1}{s}}, \, 0, \, s\right) & s> 0\\
(1, 0, 0, 0) & s=0\\
\displaystyle \left(\sqrt{s^{2}e^{\frac{2}{s}}+s^{2}+1},\, 0, \, s e^{\frac{1}{s}}, \, s\right) & s< 0.\end{cases}
\end{equation}
Differentiating it, we have
\begin{equation}
\TT^{\prime}(s)=\begin{cases}
\displaystyle \left(\frac{e^{-\frac{2}{s}}(1+s+e^{\frac{2}{s}}s)}{\sqrt{1+(1+e^{-\frac{2}{s}})s^{2}}}, \frac{e^{-\frac{1}{s}}(1+s)}{s}, 0, 1\right) & s> 0\\
(0, 0, 0, 1) & s=0\\
\displaystyle \left(\frac{e^{\frac{2}{s}}(-1+s)+s}{\sqrt{1+(1+e^{\frac{2}{s}})s^{2}}}, 0, \frac{e^{\frac{1}{s}}(-1+s)}{s}, 1\right) & s< 0.\end{cases}
\end{equation}
So the curve is 2-regular. By Proposition \ref{proposition:1}, the curve admits a frame of type D and the proof of not admitting a frame of type F is given by the same way as in \cite{NN}. The example \ref{ex:noF} implies Theorem \ref{thm:noF} and Theorem \ref{thm:noCD}-(1)
\end{example}

\section{Transformations between frames}
We now show the remaining parts of the hierarchy of frames by using transformations between them. In \cite{NN}, these results were demonstrated in the Euclidean case, and the same ideas hold for the Lorentz case with minor modifications to certain assumptions. Let $\ZZ_0$ and $\ZZ_1$ be frames of some type, and let $X_0$ and $X_1$ be their corresponding coefficient matrices. For our generalized Bishop frames, we have the relations $\ZZ_0' = X_0 \ZZ_0$ and $\ZZ_1' = X_1 \ZZ_1$. When we differentiate $\hG = \ZZ_1 \ZZ_0^{-1}$, we obtain 
\[
\hG' = \ZZ'_1 \ZZ_0^{-1} + \ZZ_1 (\ZZ_0^{-1})' = X_1 \ZZ_1\ZZ_0^{-1} - \ZZ_1 \ZZ_0^{-1} X_0 = X_{1}{\hG}-{\hG}X_{0}.  
\]
This equation $\hG' = X_{1}{\hG}-{\hG}X_{0}$ of the transformation between frames is the key to the arguments in this section by the following lemma.
\begin{lemma}\label{lem:1}
Let $\gamma : I \to \LL^{4}$ be a regular time-like curve. Let $\ZZ_0$ be a frame of $\gamma$ such that $\ZZ'_0 = X_0 \ZZ_0$, and a function $X_1 : I \to \mathfrak{o}(1,3)$. Consider a frame $\ZZ_1$ of $\gamma$ such that 
\begin{equation}\label{eq:X1}
\ZZ'_1 = X_1 \ZZ_1. 
\end{equation}
Then the transformation $\hG : I \to O (1,3)$ from $\ZZ_0$ to $\ZZ_1$ given by $\hG = \ZZ_1 \ZZ_0^{-1}$ satisfies the differential equation
\begin{equation}
\label{eq:a}{\hG}^{\prime}=X_{1}{\hG}-{\hG}X_{0}.
\end{equation}
Conversely, if a function $\hG : I \to O (1,3)$ satisfies the differential equation \eqref{eq:a}, then the frame $\ZZ_1$ given by $\ZZ_1=\hG \ZZ_0$ satisfies the differential equation \eqref{eq:X1}.
\end{lemma}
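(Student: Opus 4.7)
The plan is to verify both directions by direct computation using the product rule for matrix-valued functions; no clever idea is required, only careful bookkeeping of noncommutative matrix products.

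For the forward direction, I would start from $\hG = \ZZ_1 \ZZ_0^{-1}$ and apply the product rule to get $\hG' = \ZZ'_1 \ZZ_0^{-1} + \ZZ_1 (\ZZ_0^{-1})'$. The key intermediate step is the formula $(\ZZ_0^{-1})' = -\ZZ_0^{-1} X_0$, which I would obtain by differentiating the identity $\ZZ_0 \ZZ_0^{-1} = I_4$ and substituting $\ZZ'_0 = X_0 \ZZ_0$. Plugging this together with $\ZZ'_1 = X_1 \ZZ_1$ into the expression for $\hG'$ yields
\[
\hG' = X_1 \ZZ_1 \ZZ_0^{-1} - \ZZ_1 \ZZ_0^{-1} X_0 = X_1 \hG - \hG X_0,
\]
which is exactly equation \eqref{eq:a}.

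For the converse, I would assume $\hG : I \to O(1,3)$ satisfies \eqref{eq:a} and \emph{define} $\ZZ_1 := \hG \ZZ_0$. Since both factors take values in $O(1,3)$, so does $\ZZ_1$, and therefore $\ZZ_1$ is a bona fide orthonormal frame along $\gamma$. Differentiating gives
\[
\ZZ'_1 = \hG' \ZZ_0 + \hG \ZZ'_0 = (X_1 \hG - \hG X_0)\ZZ_0 + \hG X_0 \ZZ_0,
\]
and the two occurrences of $\hG X_0 \ZZ_0$ cancel, leaving $\ZZ'_1 = X_1 \hG \ZZ_0 = X_1 \ZZ_1$, which is \eqref{eq:X1}.

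I do not expect any substantive obstacle, since the argument reduces to two applications of the product rule together with the standard derivative-of-the-inverse identity. The only point that demands care is the left–right ordering of matrix products (the matrices do not commute, so the distinction between $X_1 \hG$ and $\hG X_1$ is essential), and the observation that the orthogonality of $\ZZ_1$ is automatic from $\hG, \ZZ_0 \in O(1,3)$, so that we do not have to impose any further constraint beyond \eqref{eq:a} itself.
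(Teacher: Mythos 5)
Your proof is correct and matches the paper's approach exactly: the paper establishes the forward direction by the same product-rule computation with $(\ZZ_0^{-1})' = -\ZZ_0^{-1}X_0$ displayed just before the lemma statement, and the converse is the same routine differentiation of $\ZZ_1 = \hG\,\ZZ_0$. Your added remark that $\ZZ_1$ remains in $O(1,3)$ because $O(1,3)$ is closed under products is a sensible (if implicit in the paper) touch.
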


After $G$ acting  on the frames, $\TT$ remains $\TT$ in the frames, therefore we have a matrix form $G\in\left\{\begin{pmatrix}1&\bm{0}\\
^{t}\bm{0}&O(3)
\end{pmatrix}\right\}\subset O(1,3).$
By using Lemma \ref{lem:1}, we can relate coefficient matrices of frames of different types. The curvature of a frame of type D is related to the curvature of a frame of type F in a simple way as in \cite{NN}.

Now let us prove the following, which is a combination of Theorem \ref{thm:hier}-(2) and a part of Corollary \ref{thm:2reg}:
\begin{theorem}\label{th:DtoC}
If a regular time-like curve $\gamma$ admits a generalized Bishop frame of type D, then $\gamma$ admits a generalized Bishop frame of type C. In particular,
every time-like $2$-regular curve $\gamma$ admits a frame of type C.
\end{theorem}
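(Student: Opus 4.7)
The plan is to realize a type C frame on $\gamma$ as a transformation of the given type D frame $\{\TT, \DD_1, \DD_2, \DD_3\}$ in the sense of Lemma \ref{lem:1}. Because the tangent vector is fixed, the sought transformation reduces to a smooth choice of orthonormal basis $\{\ZZ_1,\ZZ_2,\ZZ_3\}$ of the three-dimensional space-like normal bundle satisfying the three vanishing conditions that single out a type C coefficient matrix: $\TT' \perp \ZZ_3$, $\ZZ_2' \perp \ZZ_3$, and $\ZZ_1' \perp \ZZ_2$.

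The first step is to construct $\ZZ_2$ directly via a Bishop-type ODE,
\[
\ZZ_2'(s) = \langle \ZZ_2(s), \TT'(s)\rangle \, \TT(s),
\]
with initial condition a unit space-like vector $\ZZ_2(s_0)\in\TT(s_0)^\perp$. This is linear in $\ZZ_2$, so a unique smooth global solution exists on $I$. Following the computations in Lemma \ref{lemma:0}, one verifies that $\langle \ZZ_2,\TT\rangle$ and $\langle \ZZ_2,\ZZ_2\rangle$ are conserved, so $\ZZ_2$ remains a unit space-like normal vector field along $\gamma$, and by the very form of the ODE one has $\ZZ_2' = c_2 \TT$ with $c_2 = \langle \ZZ_2, \TT'\rangle$.

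The second step is to define $\ZZ_3$ as a unit space-like normal orthogonal to both $\DD_1$ and $\ZZ_2$, and then let $\ZZ_1$ be the unique unit space-like vector (up to sign) completing $\{\TT, \ZZ_1, \ZZ_2, \ZZ_3\}$ to an orthonormal frame. All three type C conditions then drop out automatically by differentiating the orthonormality relations: $\TT' = d_1 \DD_1$ is orthogonal to $\ZZ_3$ because $\ZZ_3 \perp \DD_1$; $\ZZ_2' = c_2 \TT$ is orthogonal to $\ZZ_3$ because $\ZZ_3 \perp \TT$; and $\langle \ZZ_1',\ZZ_2\rangle = -\langle \ZZ_1,\ZZ_2'\rangle = -c_2\langle \ZZ_1,\TT\rangle = 0$.

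The main obstacle is guaranteeing that $\ZZ_3$ is smoothly defined on all of $I$, which requires $\ZZ_2(s)\ne\pm\DD_1(s)$ for every $s\in I$ so that $\operatorname{span}(\DD_1,\ZZ_2)$ remains two-dimensional. I plan to resolve this by a Sard-type argument: the set of initial conditions $\ZZ_2(s_0)$ in the unit two-sphere $S^2\subset \TT(s_0)^\perp$ whose trajectory meets $\pm\DD_1(s)$ at some $s\in I$ is the image of a smooth map from the one-dimensional parameter space $I\times\{\pm 1\}$, hence has measure zero in $S^2$, so a generic initial condition yields a type C frame defined on all of $I$. The ``in particular'' clause is then immediate from Corollary \ref{cor:2regD}, which furnishes a type D frame for every time-like $2$-regular curve.
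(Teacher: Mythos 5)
Your proof is correct, and at its core it runs on the same engine as the paper's: a parallel (rotation\nobreakdash-minimizing) normal field chosen by a Sard/measure-zero argument so that it is everywhere transverse to the distinguished direction $\DD_1$ with $\TT'=d_1\DD_1$. Indeed, your bad set of initial conditions, $\{\pm\Phi_s^{-1}(\DD_1(s)):s\in I\}$ for the flow $\Phi_s$ of the linear ODE, is exactly the set $[(y_1,y_2,y_3)(s)]$ that the paper excludes via the map $f:I\to\RR P^2$ after expressing $\DD_1$ in a Bishop frame. Where you genuinely diverge is in the machinery: the paper first invokes Lemma \ref{lemma:0} to get a full Bishop frame, rotates it by a constant $Q\in O(3)$ via Lemma \ref{lem:Bishop}, and then applies Lemma \ref{lem:2}, which produces the type C frame through an explicit rotation by an angle $\theta$ with $\theta'=\pm\frac{y_3'y_1-y_3y_1'}{y_1^2+y_3^2}$ and yields the curvature formulas $c_1=\pm\sqrt{b_1^2+b_3^2}$, $c_2=b_2$, $c_3=\pm\theta'$. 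You instead build only the single parallel field $\ZZ_2$ you need, take $\ZZ_3$ orthogonal to $\TT,\DD_1,\ZZ_2$ (space-like by Lemma \ref{lorentz}, and smooth since $\DD_1-\langle\DD_1,\ZZ_2\rangle\ZZ_2$ is a nonvanishing section spanning $\ZZ_1$), and verify the three vanishing conditions directly from orthonormality; this is shorter and avoids Lemmas \ref{lem:Bishop} and \ref{lem:2} entirely, at the cost of not producing the explicit curvatures $c_1,c_2,c_3$ of the resulting type C frame. The frame you obtain is in fact the same one the paper constructs (your $\ZZ_1$ is the normalized projection of $\DD_1$ onto the complement of $\ZZ_2$, which is the paper's $\cos\theta\,\hat{\BB}_1+\sin\theta\,\hat{\BB}_3$), and your handling of the ``in particular'' clause via Corollary \ref{cor:2regD} matches the paper.
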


We will need two lemmas to prove Theorem \ref{th:DtoC}.

\begin{lemma}\label{lem:2}
Let
\begin{equation}\label{eq:BC}
X_{\BB}= \begin{pmatrix}
0&b_{1}&b_{2}&b_{3}\\
b_{1}&0&0&0\\
b_{2}&0&0&0\\
b_{3}&0&0&0
\end{pmatrix}, \quad\quad 
X_{\CC} = \begin{pmatrix}
0&c_{1}&c_{2}&0\\
c_{1}&0&0&c_{3}\\
c_{2}&0&0&0\\
0&-c_{3}&0&0
\end{pmatrix}, 
\end{equation} 
where $b_1, b_2, b_3, c_1, c_2, c_3$ are functions on an interval $I$. Let $\gamma : I \to \LL^{4}$ be a regular time-like curve which has a Bishop frame $\{\TT, \BB_1, \BB_2, \BB_3\}$ whose coefficient matrix is $X_{\BB}$. 
Assume that $\TT' = d_1\DD_1$ for a smooth unit normal vector $\DD_1$ and a function $d_{1}$ and $\DD_1$ is nowhere tangent to $\BB_2$. Express $\DD_1$ as $\DD_1 = y_1 \BB_1 + y_2\BB_2 + y_3\BB_3$ for some functions $y_1, y_2, y_3$. 
Then $\gamma$ admits a generalized Bishop frame $\CC$ of type C whose coefficient matrix $X_{\CC}$, where $c_1,c_2,c_3$ are given by 
\begin{equation}\label{eq:cs}
c_{1}=\pm\sqrt{b_{1}^{2}+b_{3}^{2}}, \quad c_{2}=b_{2}, \quad c_{3}=\pm\frac{y_{3}^{\prime}y_{1}-y_{3}y_{1}^{\prime}}{y_{1}^{2}+y_{3}^{2}},
\end{equation}
and the transformation $G = \CC \BB^{-1}$ is of the form 
\begin{equation}\label{eq:bGbc1}
\left(\begin{matrix}
1&0&0&0\\
0&\pm\cos \theta &0&\pm\sin \theta\\
0&0&1&0\\
0&-\sin \theta&0&\cos \theta\\
\end{matrix}\right)
\end{equation}
for some function $\theta$ such that $\theta'=\pm \frac{y_{3}^{\prime}y_{1}-y_{3}y_{1}^{\prime}}{y_{1}^{2}+y_{3}^{2}}$. 
\end{lemma}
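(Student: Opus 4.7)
The strategy is to realize $G$ as a rotation in the $\BB_1$-$\BB_3$ plane by some angle $\theta(s)$, leaving $\TT$ and $\BB_2$ fixed, with $\theta$ chosen so that the projection of $\TT' = d_1\DD_1$ onto the $\BB_1\BB_3$-plane is parallel to the new vector $\CC_1$. This is precisely what forces the result to be of type C: the missing $(1,4)$ entry of the target coefficient matrix is $\langle \TT', \CC_3\rangle$, and asking that it vanish imposes $\tan\theta = y_3/y_1$ (after using the identity $b_i = d_1 y_i$ that comes from comparing the two expressions $\TT' = b_1\BB_1 + b_2\BB_2 + b_3\BB_3$ and $\TT' = d_1\DD_1 = d_1(y_1\BB_1 + y_2\BB_2 + y_3\BB_3)$).

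I first construct $\theta$ globally. The nontangency hypothesis forces $(y_1(s), y_3(s)) \neq (0,0)$ everywhere on $I$: otherwise $\DD_1 = y_2\BB_2 = \pm \BB_2$ at that point, contradicting the assumption. Because $I$ is an interval, the nowhere-vanishing smooth plane vector field $s \mapsto (y_1(s), y_3(s))$ admits a smooth polar-angle lift $\theta : I \to \RR$ satisfying $\cos\theta = y_1/\sqrt{y_1^2+y_3^2}$ and $\sin\theta = y_3/\sqrt{y_1^2+y_3^2}$. I then set
\[
\CC_1 = \cos\theta\,\BB_1 + \sin\theta\,\BB_3, \quad \CC_2 = \BB_2, \quad \CC_3 = -\sin\theta\,\BB_1 + \cos\theta\,\BB_3,
\]
which is an orthonormal triple in $\TT^{\perp}$, so $\{\TT, \CC_1, \CC_2, \CC_3\}$ is an orthonormal frame of $\gamma$ and the transformation $G = \CC\BB^{-1}$ has exactly the form \eqref{eq:bGbc1} with the plus signs; the remaining sign choices are obtained by the substitutions $\theta \mapsto \theta + \pi$ and $\theta \mapsto -\theta$.

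Verifying the coefficient matrix is then a short direct differentiation. Using $\BB_1' = b_1\TT$ and $\BB_3' = b_3\TT$ read off from $X_{\BB}$, the $\TT$-components of $\CC_1'$, $\CC_2'$, $\CC_3'$ compute to $b_1\cos\theta + b_3\sin\theta = \pm\sqrt{b_1^2+b_3^2}$, $b_2$, and $-b_1\sin\theta + b_3\cos\theta = 0$ respectively, yielding $c_1, c_2$ as in \eqref{eq:cs} and the required zero entry in row~4; the rotational parts contribute $\pm\theta'$ in rows~2 and 4, so $c_3 = \theta'$, and differentiating $\tan\theta = y_3/y_1$ produces the stated formula for $c_3$. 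The only genuinely nonroutine point is recognizing that the hypothesis ``$\DD_1$ nowhere tangent to $\BB_2$'' is exactly what makes the planar direction of rotation globally well-defined and smooth; once $\theta$ is in hand, everything else is bookkeeping.
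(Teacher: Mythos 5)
Your proof is correct and is essentially the paper's argument: both rotate $\{\BB_1,\BB_3\}$ by the polar angle $\theta$ of the nowhere-vanishing pair $(y_1,y_3)$, which is exactly the transformation \eqref{eq:bGbc1}, and then verify the entries of the resulting coefficient matrix. The only cosmetic difference is that the paper routes the verification through Lemma \ref{lem:1} (checking the block equations ${}^{t}{\bm c}=\bG\,{}^{t}{\bm b}$ and $\bG'=\bX_{\CC}\bG$ for $\hG'=X_{\CC}\hG-\hG X_{\BB}$), whereas you differentiate the rotated frame directly; the computations are identical.
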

Note that the signs of entries of the coefficient matrix are different from the Euclidean case and we can use $O(3)$ to transform into a frame of type $\BB$ to a frame of type $\CC$ for time-like curves as in \cite{NN}.
The following lemma directly follows from Lemma \ref{lem:1} in the case where both $\ZZ_0$ and $\ZZ_1$ are frames of type B.

\begin{lemma}\label{lem:Bishop}
If a time-like curve $\gamma$ admits a Bishop frame $\BB$ whose coefficient matrix is $X_{\BB}$, then the coefficient matrix of any other Bishop frame is of the form 
\begin{equation}\label{eq:Qc}
\begin{pmatrix}1 & \o \\ {}^{t}\o & Q\end{pmatrix}
X_{\BB}\begin{pmatrix}1 & \o \\ {}^{t}\o & Q^{-1} \end{pmatrix}
\end{equation}
for some constant matrix $Q \in O(3)$. Conversely, for any constant $Q \in O(3)$, there exists a Bishop frame whose coefficient matrix is \eqref{eq:Qc}.
\end{lemma}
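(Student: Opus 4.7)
The plan is to deploy Lemma \ref{lem:1} with both $\ZZ_0$ and $\ZZ_1$ taken to be Bishop frames of $\gamma$ sharing the tangent $\TT$, and to read off what constraints the Bishop shape of both coefficient matrices imposes on the transformation $\hG = \ZZ_1\ZZ_0^{-1}$. Since both frames have $\TT$ as their first vector, the discussion after Lemma \ref{lem:1} tells us that $\hG$ takes the block form
\[
\hG=\begin{pmatrix}1 & \o \\ {}^{t}\o & \bG\end{pmatrix}, \qquad \bG:I\to O(3).
\]

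Write the two Bishop coefficient matrices in block form as
\[
X_0=\begin{pmatrix} 0 & \bm{b}_0 \\ {}^{t}\bm{b}_0 & O\end{pmatrix},\qquad
X_1=\begin{pmatrix} 0 & \bm{b}_1 \\ {}^{t}\bm{b}_1 & O\end{pmatrix},
\]
where $\bm{b}_0,\bm{b}_1$ are row vectors and the $3\times 3$ blocks vanish. Substituting these and the block form of $\hG$ into the transformation equation $\hG'=X_1\hG-\hG X_0$ from Lemma \ref{lem:1} and comparing blocks, the bottom-right block yields $\bG'=O$, so that $\bG$ is a \emph{constant} matrix $Q\in O(3)$; the remaining blocks then give $\bm{b}_1=\bm{b}_0 Q^{-1}$ (the top-right and bottom-left equations being equivalent because $Q^{-1}={}^{t}Q$). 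A direct computation then confirms that $X_1$ coincides with the conjugation \eqref{eq:Qc} of $X_0=X_{\BB}$.

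For the converse, take any constant $Q\in O(3)$ and set $\hG=\bigl(\begin{smallmatrix}1 & \o \\ {}^{t}\o & Q\end{smallmatrix}\bigr)$ together with $X_1$ defined by \eqref{eq:Qc}. A routine check shows that $\hG'=0=X_1\hG-\hG X_0$, so by the converse part of Lemma \ref{lem:1}, $\ZZ_1=\hG \BB$ is a frame whose coefficient matrix is $X_1$. Finally, the block computation used in the forward direction shows that conjugation by $\hG$ preserves the Bishop shape, so $\ZZ_1$ is itself a Bishop frame.

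The argument is essentially routine; the one step that carries the content is the block computation that forces $\bG'=0$, and this is precisely where the vanishing $3\times 3$ block common to both Bishop coefficient matrices is used. No real obstacle arises beyond being careful with signs and with the identification $Q^{-1}={}^{t}Q$ in $O(3)$; in particular, the Lorentzian signature plays no role here since the restriction of the transformation to the space-like normal part lands automatically in the Euclidean $O(3)$.
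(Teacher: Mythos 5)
Your proof is correct and follows the same route the paper takes: the paper states that Lemma \ref{lem:Bishop} follows directly from Lemma \ref{lem:1} applied with both $\ZZ_0$ and $\ZZ_1$ of type B, and your block computation (forcing $\bG'=O$ from the vanishing $3\times 3$ blocks, then reading off $\bm{b}_1=\bm{b}_0Q^{-1}$, with consistency of the two off-diagonal equations coming from $Q^{-1}={}^{t}Q$) is exactly the verification the paper leaves implicit. Your closing remark that the symmetric, rather than skew-symmetric, Bishop coefficient matrix changes nothing in the argument matches the paper's own comment that only the signs differ from the Euclidean case.
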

Lemma \ref{lem:Bishop} is almost the same as the Euclidean case, but the signs of the entries of the coefficient matrix are different.
\begin{proof}[Proof of Theorem \ref{th:DtoC}]
Assume that $\gamma$ admits a generalized Bishop frame of type D. Then, by Proposition \ref{proposition:1}, we have a unit normal vector $\DD_1$ and a smooth function $d_1$ such that $\TT'=d_1\DD_1$.  
Let $\DD_1 = y_1\BB_1 + y_2\BB_2 +y_3\BB_3$.
Consider $f : I \to \RR P^{2} ; s \mapsto [(y_1,y_2,y_3)(s)]$. By Sard's theorem, $f$ is not surjective. Take a unit vector $\bm{\xi} \in \RR^3$ such that $[\bm{\xi}] \in \RR P^{2} - \operatorname{Image} f$. Take $Q \in O(3)$ so that $\bm{\xi} Q^{-1} = (0,1,0)$. Then, by Lemma \ref{lem:Bishop}, $\gamma$ admits a Bishop frame $\{\TT, \hat{\BB}_1, \hat{\BB}_2, \hat{\BB}_3\}$ whose coefficient matrix is $\begin{pmatrix} 0 & {\bm b}Q^{-1}  \\ Q{}^{t}{\bm b} & O \end{pmatrix}$. Since $(y_1,y_2,y_3)Q^{-1}$ is never tangent to $(0,1,0)$, the vector $\DD_1$ is never tangent to $\hat{\BB}_2$. Then, by Lemma \ref{lem:2}, $\gamma$ admits a frame of type C.
\end{proof}
Note that the signs of entries of the coefficient matrix are different and we can use $O(3)$ to transform a frame of type $\BB$ to a frame of type $\BB$ for time-like curves as in \cite{NN}.
By Lemma \ref{lemma:0}, every regular time-like curve admits a Bishop frame. Therefore, by Corollary \ref{cor:2regD} and Theorem \ref{th:DtoC}, we get Corollary \ref{thm:2reg}.

The rest of hierarchy of frame is Theorem \ref{thm:noCD}-(3). It follows from the following proposition, which can be proved in the same way as in \cite{NN}.
\begin{proposition}\label{prop:typeC}
Let $\gamma$ be a regular time-like curve in $\LL^4$ with a Bishop frame $\BB$ whose coefficient matrix is $X_{\BB}=\begin{pmatrix}
0&b_{1}&b_{2}&b_{3}\\
b_{1}&0&0&0\\
b_{2}&0&0&0\\
b_{3}&0&0&0
\end{pmatrix}$, where $b_1, b_2, b_3$ are given by
\begin{align*}
b_{1}(s) & =\begin{cases}
e^{\frac{1}{s}} & s<0 \\
e^{-\frac{1}{s-2}} & 2<s\\
0 & \text{else},
\end{cases} \\
b_{2}(s) & =\begin{cases}
e^{-\frac{1}{s(-s+1)}} & 0<s<1 \\
0 & \text{else},
\end{cases} \\
b_{3}(s) & =\begin{cases}
e^{-\frac{1}{(s-1)(-s+2)}} & 1<s<2 \\
0 & \text{else}.
\end{cases}
\end{align*}
Then $\gamma$ does not admit a frame of type C.
\end{proposition}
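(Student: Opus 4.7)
The plan is a proof by contradiction that exploits the pairwise disjoint supports of $b_{1},b_{2},b_{3}$: the derivative $\TT' = b_{1}\BB_{1} + b_{2}\BB_{2} + b_{3}\BB_{3}$ is a scalar multiple of a single Bishop normal on each of the four open intervals $(-\infty,0)$, $(0,1)$, $(1,2)$, $(2,\infty)$, and $\TT'$ vanishes at each of the transition points $s=0,1,2$.

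Suppose $\gamma$ admits a frame of type C. After permuting the three normal vectors if necessary (which amounts to replacing the coefficient matrix by one of the other five forms listed for type C), we may assume the coefficient matrix takes the canonical form
\[
\begin{pmatrix}
0 & c_{1} & c_{2} & 0 \\
c_{1} & 0 & 0 & c_{3} \\
c_{2} & 0 & 0 & 0 \\
0 & -c_{3} & 0 & 0
\end{pmatrix},
\]
so that $\CC_{2}' = c_{2}\TT$; that is, $\CC_{2}$ is a ``Bishop-like'' normal. Expanding $\CC_{2} = a_{1}\BB_{1} + a_{2}\BB_{2} + a_{3}\BB_{3}$ and using $\BB_{i}' = b_{i}\TT$, the condition $\CC_{2}'\parallel\TT$ together with the linear independence of $\BB_{1},\BB_{2},\BB_{3}$ forces $a_{i}' \equiv 0$, so the $a_{i}$ are constants with $a_{1}^{2} + a_{2}^{2} + a_{3}^{2} = 1$.

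From $\TT' = c_{1}\CC_{1} + c_{2}\CC_{2}$ with $\CC_{1}\perp\CC_{2}$, on any open interval where $\TT' - c_{2}\CC_{2}\neq 0$ the unit vector $\CC_{1}$ is determined up to sign by $\pm(\TT'-c_{2}\CC_{2})/\|\TT'-c_{2}\CC_{2}\|$. Using the disjoint supports of $b_{1},b_{2},b_{3}$ one finds $\CC_{1}\propto\BB_{1}-a_{1}\CC_{2}$ on $(-\infty,0)\cup(2,\infty)$, $\CC_{1}\propto\BB_{2}-a_{2}\CC_{2}$ on $(0,1)$, and $\CC_{1}\propto\BB_{3}-a_{3}\CC_{2}$ on $(1,2)$. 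Continuity of $\CC_{1}$ at each transition point requires the two adjacent limiting directions to be parallel. A short cross-product computation in the Bishop basis, using $a_{1}^{2}+a_{2}^{2}+a_{3}^{2}=1$, yields
\[
(\BB_{i}-a_{i}\CC_{2})\times(\BB_{j}-a_{j}\CC_{2}) = a_{k}(a_{1},a_{2},a_{3}),
\]
where $\{i,j,k\} = \{1,2,3\}$. So parallelism at the transitions $s=0,1,2$ forces $a_{3}=0$, $a_{1}=0$, $a_{2}=0$ in turn, contradicting $a_{1}^{2}+a_{2}^{2}+a_{3}^{2}=1$.

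The main subtlety is the degenerate case $\CC_{2} = \pm\BB_{i}$ for some $i\in\{1,2,3\}$, in which $\BB_{i}-a_{i}\CC_{2}=0$ so $\CC_{1}$ is not pinned down by the formula above on the interval where $b_{i}$ is active. In that case $\CC_{1}$ is free on that interval, but on the two remaining active intervals the same formula still gives $\CC_{1}=\pm\BB_{j}$ and $\CC_{1}=\pm\BB_{k}$ with $\{j,k\}=\{1,2,3\}\setminus\{i\}$; continuity of $\CC_{1}$ at the transition point between those two intervals would then force $\BB_{j}$ and $\BB_{k}$ to be parallel, contradicting orthonormality of the Bishop frame. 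This completes the argument.
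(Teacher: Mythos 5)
Your proof is correct, and it is self-contained, which is worth noting because the paper itself gives no argument for Proposition \ref{prop:typeC} beyond the remark that it ``can be proved in the same way as in \cite{NN}.'' Your strategy is the one that reference is pointing at: reduce to the canonical type~C coefficient matrix, observe that the normal $\CC_{2}$ with $\CC_{2}'=c_{2}\TT$ must have \emph{constant} coordinates $(a_{1},a_{2},a_{3})$ in the Bishop frame (this is exactly where the type~C structure is rigid), and then use the pairwise disjoint supports of $b_{1},b_{2},b_{3}$ to force $\CC_{1}$ to point in three mutually incompatible directions. I checked the two computational claims: $\|\BB_{i}-a_{i}\CC_{2}\|^{2}=1-a_{i}^{2}$, so $\CC_{1}$ is indeed pinned down up to sign on each active interval whenever $a_{i}^{2}\neq 1$; and the cross-product identity $(\BB_{i}-a_{i}\CC_{2})\times(\BB_{j}-a_{j}\CC_{2})=a_{k}(a_{1},a_{2},a_{3})$ holds in the Euclidean normal space $\operatorname{span}\{\BB_{1},\BB_{2},\BB_{3}\}$ (space-like by Lemma \ref{lorentz}, so the cross product is legitimate there). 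The three transition points then give $a_{3}=a_{1}=a_{2}=0$, contradicting $\sum a_{i}^{2}=1$, and your treatment of the degenerate case $\CC_{2}=\pm\BB_{i}$ is also sound, since for each choice of $i$ the two remaining indices are active on a pair of adjacent intervals ($s=1$, $s=2$, or $s=0$ respectively), where continuity of $\CC_{1}$ would force two orthonormal Bishop normals to be parallel. The only cosmetic point: when invoking continuity at a transition point you only need the one-sided limits of $\CC_{1}$ to be parallel to the limits of $\BB_{i}-a_{i}\CC_{2}$, so the constancy of the sign on each interval need not be argued.
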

\section*{Acknowledgments}
It is a pleasure to thank Professor Hiraku Nozawa for encouragement, useful remarks and suggestions. I would also like to acknowledge the support of Ritsumeikan University.

\end{document}